\newtheorem{thm}{Theorem}
\newtheorem{prop}[thm]{Proposition}
\renewenvironment{proof}{\par\noindent{\bf Proof.}}{$\square$\par\bigskip}
\def\Z{\mathbb Z}\newtheorem{lemma}[thm]{Lemma}
\newtheorem{cor}[thm]{Corollary}
\def\N{\mathbb N}
\def\Q{\mathbb Q}
\def\log{\operatorname{log}}
\def\m{\operatorname{\underline{m}}}
\begin{document}
\title{Density of solutions to quadratic congruences}
\author[Neha Prabhu]{Neha Prabhu}
\address{Neha Prabhu, IISER Pune, Dr Homi Bhabha Road, Pashan, Pune - 411008, Maharashtra, India}
\email{neha.prabhu@students.iiserpune.ac.in}
\thanks{The research of the author is supported by a PhD scholarship from the National Board for Higher Mathematics, India.}

\maketitle

\begin{abstract}
A classical result in number theory is Dirichlet's theorem on the density of primes in an arithmetic progression. We prove a similar result for numbers with exactly $k$ prime factors for $k>1$. Building upon a proof by E.M. Wright in 1954, we compute the natural density of such numbers where each prime satisfies a congruence condition. As an application, we obtain the density of squarefree $n\leq x$ with $k$ prime factors such that a fixed quadratic equation has exactly $2^k$ solutions modulo $n$.\\
\\
\textit{Keywords:} Dirichlet's theorem, asymptotic density, primes in arithmetic progression, squarefree numbers.\\
\\
\textit{MSC 2010:} 11D45, 11B25, 11N37.
\end{abstract}

\section{Introduction}
\noindent
The theory of solving a quadratic equation modulo $p$ for $p$ prime has been well studied. Investigating whether a given quadratic equation has solutions, how many there are and calculating what the solutions are, has led to beautiful theorems such as the law of quadratic reciprocity. A related question is the following:

Suppose we fix a quadratic equation $f(x)= x^2 +b x +c$, where $b,c \in \Z$ and would like to know how often the equation $f(x)= 0$ has solutions modulo $N$ if we vary $N$ in a certain range. Let us first look at the case where we vary over primes $p$ not exceeding $x$. Dirichlet, in 1837, showed that solutions would exist for approximately half the primes. In 1896, this was made precise by de la Vall\'{e}e-Poussin. Noting that $f(x)$ has exactly two solutions if and only if the discriminant $D = b^2-4c$ is a square $\bmod \, p$, what Dirichlet and de la Vall\'{e}e-Poussin showed was essentially the following:\\
\begin{prop} \label{Prime}
For a fixed non-square integer $D$, as $x \rightarrow \infty$, $$\dfrac{1}{\pi(x)}\#\left\{ p\leq x, p \text{ prime } : \left(\dfrac{D}{p}\right)=1\right\} \sim \dfrac{1}{2}$$
and
$$ \dfrac{1}{\pi(x)}\#\left\{ p\leq x, p \text{ prime } : \left(\dfrac{D}{p}\right)=-1\right\} \sim \dfrac{1}{2},$$
  where $\left(\dfrac{D}{\cdot}\right)$ is the Kronecker-Legendre symbol and $\pi(x)$ denotes the number of primes not exceeding $x$.
\end{prop}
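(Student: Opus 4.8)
The plan is to realise the Legendre--Kronecker symbol as a non-principal Dirichlet character and then to apply the prime number theorem for that character. First I would use quadratic reciprocity to show that, for a fixed non-square integer $D$, the map $p \mapsto \left(\frac{D}{p}\right)$ coincides for all $p \nmid 4D$ with a real Dirichlet character $\chi$ of some modulus $m \mid 4D$; because $D$ is not a square, $\chi$ is non-principal, and as a real character it takes each of the values $\pm 1$ on exactly half of the reduced residue classes modulo $m$.

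With $S^{\pm}(x) := \#\{p \le x : \chi(p) = \pm 1\}$, the finitely many primes dividing $4D$ contribute $\O(1)$ and may be ignored, so I would write
$$S^{\pm}(x) = \frac{1}{2}\sum_{\substack{p \le x \\ p\nmid 4D}}\bigl(1 \pm \chi(p)\bigr) + \O(1) = \frac{1}{2}\Bigl(\pi(x) \pm \sum_{p \le x}\chi(p)\Bigr) + \O(1).$$
The proposition therefore reduces to the single estimate $\sum_{p\le x}\chi(p) = \o(\pi(x))$, which I would derive from the prime number theorem in the form $\sum_{n \le x}\Lambda(n)\chi(n) = \o(x)$ followed by partial summation. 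This $\psi$-type estimate is in turn a standard consequence (via Wiener--Ikehara, or a contour shift applied to $-L'(s,\chi)/L(s,\chi)$) of the fact that $L(s,\chi) = \sum_n \chi(n) n^{-s}$ is holomorphic and non-vanishing on the line $\Re(s) = 1$. Substituting back gives $S^{\pm}(x)/\pi(x) \to \tfrac12$, which is the claim.

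The step I expect to be the main obstacle is the non-vanishing of $L(s,\chi)$ on the line $\Re(s)=1$, and specifically the value $L(1,\chi)\neq 0$. For a complex character the standard $\zeta(s)^3 L(s,\chi)^4 L(s,\chi^2)$ positivity trick rules out boundary zeros painlessly, but for a real character this degenerates and the point $s=1$ must be handled separately; this is exactly the delicate case in Dirichlet's original theorem. I would resolve it either by Landau's argument---a real zero of $L(s,\chi)$ at $s=1$ would make the Dirichlet series $\zeta(s)L(s,\chi)$, which has non-negative coefficients, converge too far to the left and contradict the pole of $\zeta$---or by invoking the class number formula, which exhibits $L(1,\chi)$ as a strictly positive multiple of a class number. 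Once non-vanishing is secured, the surrounding Tauberian estimates are routine, and the two densities $\tfrac12$ follow immediately.
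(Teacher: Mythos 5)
Your proposal is correct, but it takes a genuinely different route from the paper. The paper works entirely at the level of residue classes: it factors $D=\pm q_1^{a_1}\cdots q_m^{a_m}$, uses quadratic reciprocity (with a case split on $p \bmod 4$, refined to $p \bmod 8$ when $2\mid D$) to convert the condition $\left(\frac{D}{p}\right)=1$ into an explicit union of $\frac{1}{2}\phi(Q)$ residue classes modulo $Q=4q_1\cdots q_m$, assembled via the Chinese Remainder Theorem from the $2^{m-1}$ sign patterns $X_j$ and the square/non-square classes modulo each $q_i$; it then applies the natural-density form of Dirichlet's theorem to each class separately and sums. You instead package the whole map $p\mapsto\left(\frac{D}{p}\right)$ as a single real non-principal Dirichlet character $\chi$ of modulus dividing $4D$ and reduce the statement to $\sum_{p\le x}\chi(p)=o(\pi(x))$, i.e.\ to the prime number theorem for that one character. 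The two arguments rest on the same analytic bedrock --- the non-vanishing of $L(1,\chi)$ for the real character attached to $D$, which you correctly flag as the delicate point --- but your version is shorter and avoids all of the paper's combinatorial bookkeeping and the separate treatment of $2\mid D$, at the cost of invoking the character-theoretic formulation of the Kronecker symbol (whose proof is itself quadratic reciprocity in disguise). One thing the paper's explicit computation buys that your argument only gives implicitly is the concrete list $B(\pm 1)$ of residue classes modulo $Q$, which is reused later in the proof of Theorem \ref{main-2}; in your setting the analogous role would be played by the sets $\{a \bmod m : \chi(a)=\pm 1\}$, so the later application still goes through.
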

The main ideas that go into the proof of this result are two classical results: Gauss's law of quadratic reciprocity and the natural density version of Dirichlet's theorem on the infinitude of primes in an arithmetic progression. Dirichlet proved the original theorem around 1836. Later, de la Vall\'{e}e-Poussin proved the statement about the natural density. See Chapter 4, Section IV of \cite{Rib}. He proved that for positive integers $a,q$ with $\gcd(a,q)=1$, % the set of primes congruent to $a\, \bmod q$ has natural density $\dfrac{1}{\phi(q)}.$ In other words, %
 the number of primes $p\leq x$ such that $p \equiv a \,\bmod q$ is asymptotic to $\dfrac{1}{\phi(q)}\dfrac{x}{\log x}$ as $x \rightarrow \infty$. Since then, there have been analogues of this theorem in various settings. For example, by applying the Chebotarev density theorem to the case of cyclotomic extensions $\Q(\zeta_n)$ of $\Q$, we obtain Dirichlet's theorem. The analogue in the case of function fields was proved by H. Kornblum and E. Landau in \cite{Kl}. It is natural to ask if we can extend the result to numbers with $k$ prime factors, $k > 1$. In order to do so, we would first need to talk about the analogue of $\pi(x)$ for numbers with $k$ prime factors, which is defined as follows:
$$ \tau_k(x) := \sum\limits_{{n\leq x}\atop{n=p_1p_2\ldots p_k}} 1,$$ where $n=p_1p_2\ldots p_k$ is the prime factorization of $n$, with $p_1\leq p_2\leq \ldots \leq p_k$. If we add an additional condition that the primes dividing $n$ must be distinct, then we are counting the number of squarefree positive integers not exceeding $x$, having exactly $k$ prime factors and this quantity is denoted by $\pi_k(x)$.

In 1900, E. Landau \cite{Landau} proved that 
\begin{equation}\label{Landau}
\pi_k(x) \sim \tau_k(x) \sim \dfrac{x(\log\log x)^{k-1}}{(k-1)! \log x}.
\end{equation}
In 1954, E. M. Wright gave a simpler proof of this in \cite{Wright}, which appears as Theorem 437 in \cite{HW}. There have been several attempts since then, at deriving a precise estimate with error terms. An exposition of this can be found in Section 7.4 of \cite{MV}.
%We fix $N,k \in \N$ and consider a $k$-tuple $$\m = (m_1,m_2,\ldots, m_k)$$ where each $m_i \in (\Z / N\Z)^{\times}, m_i$'s not necessarily distinct.\\
%Let $\tau_{k,\m}(x)$ denote the number of positive integers $n\leq x$ with $k$ prime factors, counted with multiplicity, satisfying $ p_i \equiv m_i \,\mod\, N \text{ for each } i=1,\ldots,k$. If all the primes are distinct, then $n$ is squarefree. Let $\pi_{k,\m}(x)$ denote the number of such squarefree $n \leq x$. Then we prove
%\begin{thm}
 %\label{mainthm}
 %$$\pi_{k,\m}(x)\sim \tau_{k,\m}(x)\sim \dfrac{1}{\phi(N)^k}\dfrac{x(\log\log x)^{k-1}}{(k-1)! \log x} \qquad(k\geq 2).$$
%\end{thm}

With this in mind, it is natural to ask if we can say something analogous to Proposition \ref{Prime} when $n$ varies over squarefree numbers.  In this paper, we prove the following:
\begin{thm} \label{main-2}
Let $D$ be a non-square integer and $k \in \N$. Fix a $k$-tuple $\underline{\varepsilon} = (\varepsilon_1,\ldots,\varepsilon_k)$ where each $\varepsilon_i = \pm 1$ for each $i= 1,\ldots,k$. Then
$$ \dfrac{1}{\pi_k(x)}\# \bigg\{n\leq x, n=p_1p_2\ldots p_k \text{ with } p_1<p_2<\ldots<p_k: \left(\dfrac{D}{p_i}\right)=\varepsilon_i \text{ for each } i  \bigg\} \sim \dfrac{1}{2^k}, $$ 
where $\pi_k(x)$ denotes the number of squarefree numbers less than $x$ with $k$ prime factors.
\end{thm}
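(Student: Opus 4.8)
The plan is to fix the pattern $\underline{\varepsilon}$, write
$$N_{\underline\varepsilon}(x):=\#\left\{n=p_1\cdots p_k\le x:\ p_1<\cdots<p_k,\ \left(\tfrac{D}{p_i}\right)=\varepsilon_i\ \text{for all }i\right\},$$
and prove the matching asymptotic $N_{\underline\varepsilon}(x)\sim \frac{1}{2^k}\,\frac{x(\log\log x)^{k-1}}{(k-1)!\,\log x}$; dividing by $\pi_k(x)$ and invoking Landau's estimate \eqref{Landau} for the denominator then yields the theorem. The single analytic input I need is a \emph{scale-uniform} Mertens estimate for the two symbol classes. Since $D$ is a non-square, $\left(\tfrac{D}{\cdot}\right)$ is a non-principal real character modulo $q=4|D|$, so each set $P_{\pm}=\{p:(D/p)=\pm1\}$ is a union of exactly $\phi(q)/2$ reduced residue classes mod $q$. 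Summing de la Vall\'ee-Poussin's prime number theorem in arithmetic progressions over these classes and applying partial summation gives, for both signs and uniformly in $t$,
$$G_{\pm}(t):=\sum_{\substack{p\le t\\ p\in P_{\pm}}}\frac1p=\tfrac12\log\log t+O(1),\qquad \#\{p\le t:p\in P_{\pm}\}\sim\frac{t}{2\log t}.$$
The decisive point is that the leading constant is $\tfrac12$ for \emph{both} classes at \emph{every} scale $t$; this is precisely what will force $N_{\underline\varepsilon}(x)$ to be independent of the arrangement of $\underline\varepsilon$.

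Next I would run Wright's argument, peeling off the largest prime factor. Write $n=m\,p_k$ with $p_k$ the largest prime; restricting to the dominant range $m\le\sqrt x$ (so that automatically $p_k>\sqrt x$ exceeds every prime of $m$, making the sorted-pattern condition on $m$ and the size condition on $p_k$ decouple), the prime $p_k$ runs over $P_{\varepsilon_k}\cap(\sqrt x,\,x/m]$, of which there are $\sim\frac12\,\frac{x/m}{\log(x/m)}\sim\frac12\,\frac{x}{m\log x}$ by the count above. Hence
$$N_{\underline\varepsilon}(x)\sim \frac12\,\frac{x}{\log x}\sum_{\substack{p_1<\cdots<p_{k-1}\le\sqrt x\\ (D/p_i)=\varepsilon_i}}\frac{1}{p_1\cdots p_{k-1}}.$$
The inner sum is an iterated Riemann-Stieltjes integral over the ordered simplex,
$$\sum_{\substack{p_1<\cdots<p_{k-1}\le\sqrt x\\ (D/p_i)=\varepsilon_i}}\frac{1}{p_1\cdots p_{k-1}}=\int\cdots\int_{2\le t_1<\cdots<t_{k-1}\le\sqrt x}\ \prod_{i=1}^{k-1}dG_{\varepsilon_i}(t_i),$$
and here the two virtues of the Mertens estimate combine: substituting $u_i=\log\log t_i$ turns each integrator into $dG_{\varepsilon_i}\sim\tfrac12\,du_i$, \emph{the same density regardless of $\varepsilon_i$}, so the integral is asymptotic to $\frac{1}{2^{k-1}}$ times the volume $\frac{(\log\log\sqrt x)^{k-1}}{(k-1)!}\sim\frac{(\log\log x)^{k-1}}{(k-1)!}$ of the simplex. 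This produces the factor $\frac{1}{(k-1)!}$, the factor $\frac{1}{2^{k-1}}$, and the pattern-independence all at once; together with the $\frac12\,\frac{x}{\log x}$ from the top prime it gives the claimed constant $\frac{1}{2^k(k-1)!}$.

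The main obstacle is upgrading this heuristic to rigorous asymptotics with controlled error, which is exactly the delicate part of Wright's original proof, now required \emph{uniformly in the congruence data}. Two estimates carry the weight. First, one must show that restricting to $m\le\sqrt x$ (equivalently, largest prime factor $>\sqrt x$) loses only a lower-order term: integers $n\le x$ with $\omega(n)=k$ all of whose prime factors are $\le\sqrt x$ must be bounded by $o\!\left(x(\log\log x)^{k-1}/\log x\right)$, which follows from the same inductive count applied one level down. Second, over the retained range one must replace $\frac{x/m}{\log(x/m)}$ by $\frac{x}{m\log x}$ and pass from the asymptotic integrator $G_{\pm}$ to the asymptotic integral, both uniformly; this requires showing that the mass of $\sum_{m}1/m$ concentrates on $m\le x^{o(1)}$, where $\log(x/m)\sim\log x$, and that the $O(1)$ error in $G_{\pm}$ contributes only lower-order simplex terms after the $(k-1)$-fold integration. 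I would organise these bounds as a single induction on $k$ (base case $k=1$ being Proposition \ref{Prime} in its counting form), so that the error terms at stage $k$ are controlled by the asymptotics already proved at stage $k-1$. Finally, the finitely many primes dividing $2D$, at which the symbol is $0$ or the argument is even, affect at most $O\!\left(x(\log\log x)^{k-2}/\log x\right)$ integers and are discarded at the outset. Assembling the main terms then yields $N_{\underline\varepsilon}(x)\sim\frac{1}{2^k}\pi_k(x)$ uniformly over the $2^k$ patterns, as required.
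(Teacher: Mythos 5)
Your proposal is correct in outline but takes a genuinely different route from the paper. The paper first proves a general equidistribution statement (Theorem \ref{mainthm}): the number of $n=p_1\cdots p_k\le x$ with each prime in a prescribed residue class mod $N$ is $\sim\frac{1}{\phi(N)^k}\frac{x(\log\log x)^{k-1}}{(k-1)!\log x}$, established by twisting Wright's $\vartheta_k$, $L_k$, $f_k$ recursion with Dirichlet characters and using orthogonality; Theorem \ref{main-2} then follows by noting (Proposition \ref{Prime}) that $\left(\frac{D}{p}\right)=\varepsilon$ cuts out exactly $\phi(Q)/2$ reduced classes mod $Q$ and summing the general theorem over the $\left(\frac{\phi(Q)}{2}\right)^k$ admissible residue tuples. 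You bypass Theorem \ref{mainthm} entirely and run a largest-prime-factor induction directly on the two symbol classes $P_\pm$, using only the Mertens-type input $\sum_{p\le t,\ p\in P_\pm}1/p=\frac12\log\log t+O(1)$. Your route is more economical (no character formalism, no permutation bookkeeping) and would apply to any partition of the primes into classes of prescribed logarithmic density; the paper's route costs more machinery but yields the stronger, reusable Theorem \ref{mainthm}. The trade-off is that the whole error analysis of Wright's method must be redone in your setting --- discarding $n$ with largest prime factor at most $\sqrt x$, replacing $\log(x/m)$ by $\log x$, and propagating the $O(1)$ in Mertens through the $(k-1)$-fold simplex integral --- and you only sketch this. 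The sketch does identify the correct estimates (in particular that $\sum 1/m$ over $m$ with $k-1$ prime factors in $(\sqrt x,x]$ saves a factor of $\log\log x$, which is what makes both error terms lower order), so the plan closes, but as written it is a program rather than a proof. One small slip in phrasing: restricting to $m\le\sqrt x$ does not ``automatically'' force $p_k>\sqrt x$; the correct dichotomy is on whether the largest prime factor of $n$ exceeds $\sqrt x$, which is what your argument actually uses.
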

The proof involves an analogous version of Dirichlet's theorem, which is the following:\\
Let us fix $N,k \in \N$ and consider a $k$-tuple $$\m_{[k]} = (m_1,m_2,\ldots, m_k)$$ where each $m_i \in (\Z / N\Z)^{\times},$ the multiplicative group of units in $\Z / N\Z$. The $m_i$'s are not necessarily distinct. Consider positive integers $n\leq x$ with $k$ prime factors, counted with multiplicity. Represent such $n$ as $n=p_1p_2\ldots p_k$ with $p_1 \leq p_2 \leq \ldots \leq p_k$. Let $\tau_{k,\m_{[k]}}(x)$ denote the number of positive integers $n\leq x$ with $k$ prime factors satisfying $ p_i \equiv m_i \,\bmod N \text{ for each } i=1,\ldots,k$. If the primes are distinct, then $n$ is squarefree. Let $\pi_{k,\m_{[k]}}(x)$ denote the number of such squarefree $n \leq x$. Then we prove
\begin{thm}
 \label{mainthm}
 $$\pi_{k,\m_{[k]}}(x)\sim \tau_{k,\m_{[k]}}(x)\sim \dfrac{1}{\phi(N)^k}\dfrac{x(\log\log x)^{k-1}}{(k-1)! \log x} \qquad(k\geq 2).$$
\end{thm}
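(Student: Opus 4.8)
The plan is to prove the statement for the squarefree count $\pi_{k,\m_{[k]}}(x)$ by induction on $k$, and then to deduce the asymptotic for $\tau_{k,\m_{[k]}}(x)$ by showing that the integers counted by $\tau_{k,\m_{[k]}}$ but not by $\pi_{k,\m_{[k]}}$ (those with a repeated prime factor) contribute only a lower-order term. The two external inputs I would use are the prime number theorem for arithmetic progressions in the strong form of de la Vall\'ee-Poussin, namely $\pi(y;N,a)\sim\frac{1}{\phi(N)}\frac{y}{\log y}$ for $\gcd(a,N)=1$, which provides the base case $k=1$, and Mertens' theorem in arithmetic progressions, $\sum_{p\le y,\,p\equiv a}\frac1p=\frac{1}{\phi(N)}\log\log y+O(1)$.

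For the inductive step the key idea is to remove the \emph{largest} prime factor $p_k$ and to restrict to $p_k>\sqrt x$. Writing $n=p_k\cdot m$ with $m\le x/p_k$, the condition $p_k>\sqrt x$ forces $m<\sqrt x<p_k$, so that $p_k$ automatically exceeds every prime factor of $m$ and the delicate ordering constraint becomes vacuous. Hence
\[ \pi_{k,\m_{[k]}}(x)=\sum_{\substack{p>\sqrt x\\ p\equiv m_k\,(\mathrm{mod}\,N)}}\pi_{k-1,\m_{[k-1]}}(x/p)+R_k(x), \]
where $\m_{[k-1]}=(m_1,\dots,m_{k-1})$ and $R_k(x)$ counts the squarefree $n\le x$ with the prescribed residues all of whose prime factors are at most $\sqrt x$. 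The first sum is an \emph{exact} reduction to the $(k-1)$-case with no ordering defect, which is precisely what the restriction to $p>\sqrt x$ buys.

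I would then evaluate the main sum by inserting the inductive hypothesis $\pi_{k-1,\m_{[k-1]}}(y)\sim\frac{1}{\phi(N)^{k-1}}\frac{y(\log\log y)^{k-2}}{(k-2)!\log y}$ and converting the sum over $p\equiv m_k$ into an integral via the prime number theorem in arithmetic progressions, writing $p=x^{s}$. The resulting expression is
\[ \frac{x}{\phi(N)^{k}(k-2)!\,\log x}\int_{1/2}^{1}\frac{(\log\log x+\log(1-s))^{k-2}}{s(1-s)}\,ds, \]
and the whole point of the computation is that the mass concentrates as $s\to1$, i.e.\ when the removed prime $p_k$ is close to $x$: the substitution $u=\log\log x+\log(1-s)$ turns the dominant part into $\int u^{k-2}\,du\sim\frac{(\log\log x)^{k-1}}{k-1}$, while the region with $s$ bounded away from $1$ contributes only $O((\log\log x)^{k-2})$. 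This produces exactly the factor $\frac{1}{(k-1)!}=\frac{1}{(k-1)(k-2)!}$ together with the extra $\frac{1}{\phi(N)}$, yielding the claimed main term. I expect this asymptotic evaluation, carried out with enough uniformity in the inductive estimate to justify inserting it under the summation (and trimming the harmless range where $x/p$ is bounded), to be the technical heart and main obstacle of the argument.

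It then remains to show $R_k(x)=o\!\left(\frac{x(\log\log x)^{k-1}}{\log x}\right)$. Here I would again peel off the largest prime $p\le\sqrt x$ and use the crude bounds $\#\{m:\Omega(m)=k-1,\ P^{+}(m)\le p\}\le\frac{1}{(k-1)!}\pi(p)^{k-1}$ for $p\le x^{1/k}$ (where $\Omega$ counts prime factors with multiplicity and $P^{+}$ is the largest prime factor) and $\tau_{k-1}(x/p)\ll\frac{(x/p)(\log\log x)^{k-2}}{\log x}$ for $x^{1/k}<p\le\sqrt x$. The saving comes from Mertens' theorem: on the range $x^{1/k}<p\le\sqrt x$ one has $\sum 1/p=O(1)$ rather than $\log\log x$, so a full factor of $\log\log x$ is lost and $R_k(x)$ has order at most $\frac{x(\log\log x)^{k-2}}{\log x}$, which is negligible. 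The same repeated-prime reasoning shows $\tau_{k,\m_{[k]}}(x)-\pi_{k,\m_{[k]}}(x)=o\!\left(\frac{x(\log\log x)^{k-1}}{\log x}\right)$, which completes the induction and the proof.
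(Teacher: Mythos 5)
Your strategy is correct but takes a genuinely different route from the paper. The paper follows Wright's elementary method: it encodes the congruence conditions through Dirichlet-character orthogonality, introduces the weighted sums $\vartheta_{k,\chi,\m_{[k]}}$ and $L_{k,\chi,\m_{[k]}}$, and studies the combination $f_{k,\chi,\m_{[k]}}(x)$ of Equation (\ref{f,theta,L}), which obeys a recursion under which a crude $o\{x(\log\log x)^{k-1}\}$ bound propagates by induction; the main term then comes from the elementary Mertens-type asymptotic for $L_{k,\chi,\m_{[k]}}$, and a single partial summation at the very end converts $\vartheta$ into a count of integers. You instead induct directly on $k$ by peeling off the largest prime factor restricted to $p>\sqrt{x}$ --- a clean device that renders the ordering constraint $p_i\equiv m_i$ vacuous and avoids the paper's bookkeeping with the permutation set $S_k'$ and the multiplicity $M$ --- and you evaluate the resulting sum by inserting the inductive asymptotic. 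Your route is the classical Landau-style one and is more transparent about where the mass sits (primes $p=x^{1-o(1)}$); Wright's trick buys freedom from ever having to insert an asymptotic uniformly under a sum over primes, since only the $o$-estimate for $f_k$ is needed. When writing your argument out, note that the displayed integral $\int_{1/2}^{1}\frac{(\log\log x+\log(1-s))^{k-2}}{s(1-s)}\,ds$ is literally divergent at $s=1$ and must be truncated where $x/p$ is bounded (you flag this; the truncated range contributes only $O_{\varepsilon}(x/\log x)$, admissible for $k\ge 2$), and that converting the sum over $p\equiv m_k$ into that integral requires partial summation against the prime number theorem in arithmetic progressions, uniformly for $x/p\ge\sqrt{x}$ --- most easily done by summing over $m$ first and counting primes in $(\sqrt{x},x/m]$. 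Finally, your multiset bound $\#\{m:\Omega(m)=k-1,\,P^{+}(m)\le p\}\le\pi(p)^{k-1}/(k-1)!$ should read $\ll_k\pi(p)^{k-1}$, but this does not affect the conclusion that $R_k(x)\ll x(\log\log x)^{k-2}/\log x$.
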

%We first prove Theorem \ref{mainthm} and use it to prove Theorem \ref{main-2}. Along with the proof of Theorem \ref{main-2}, we give the proof of Proposition \ref{Prime} for the sake of completeness.\\
\noindent
\textbf{Remark:}
Note that for $k=1$, Theorem \ref{mainthm} is exactly the statement of de la Vall\'{e}e-Poussin's version of Dirichlet's density theorem. The prime number theorem, the non-vanishing of $L(1,\chi)$ and the orthogonality relations satisfied by Dirichlet characters are the key results that are used in the proof. Similarly, in the proof of Theorem \ref{mainthm}, the natural density theorem of de la Vall\'{e}e-Poussin and Landau's result stated in Equation (\ref{Landau}) play a significant role. In fact, we essentially use the technique used by Wright in \cite{Wright} and an orthogonality relation satisfied by the Dirichlet characters to obtain the result.

The paper is divided as follows. We start by proving Theorem \ref{mainthm}. The second section sets the stage by introducing functions and notation that will be used in the proof. In the next section we prove the non-trivial part of the proof of Theorem \ref{mainthm} in detail. With Section 4, we wrap up the proof of this theorem. After that, the proof of Proposition \ref{Prime} is given for the sake of completeness and finally, the Theorem \ref{main-2} is presented, which uses Proposition \ref{Prime} and Theorem \ref{mainthm}. We also include two corollaries of the theorem.
\section{Preliminaries}
The following notation will be used in the proof of Theorem \ref{mainthm}:
\begin{enumerate}
\item{ We write $\m_{[k]}$ to denote a $k$-tuple $(m_1,m_2,\ldots,m_k)$.
}
\item{ We use $\m^i_{[k-1]}$ to denote the $(k-1)$-tuple formed by removing the $i^{th}$ coordinate of the $k$-tuple $\m_{[k]}$ under consideration.
}
\item{
Henceforth, the sum $\sum\limits_{p_1p_2\ldots p_k \leq x}$ is taken over all sets of primes $\{p_1, p_2\ldots p_k\}$ such that $p_1 p_2\ldots p_k \leq x$, two sets being considered different even if they differ only in the order of primes.
}
\item{For a fixed $\m_{[k]}$, we write \\ $$\sum\limits_{p_1p_2\ldots p_k \leq x}\boldsymbol{\chi}_{\m_{[k]}} := \sum\limits_{p_1p_2\ldots p_k \leq x}\sum\limits_{\sigma \in S'_k}\sum\limits_{\chi} \overline{\chi(m_{\sigma (1)})}\chi(p_1) \sum\limits_{\chi} \overline{\chi(m_{\sigma (2)})}\chi(p_2)\ldots \sum\limits_{\chi} \overline{\chi(m_{\sigma (k)})}\chi(p_k)$$ where
\\1. The set $S'_k$ is the subset of the symmetric group on $k$ symbols consisting of those permutations that give rise to distinct permutations of $\{m_1,m_2,\ldots,m_k\}$.\\
2. The sum $\sum\limits_{\chi}$ runs over the Dirichlet characters modulo $N$.
}
\end{enumerate}
\textbf{Note. }We have the following orthogonality relation satisfied by Dirichlet characters mod $N$:
$$\sum\limits_{\chi} \overline{\chi(m)}\chi(n) = 
\begin{cases}
\phi(N) &\text{if } m \equiv n\bmod N \\
0      & \text{otherwise.} 
\end{cases}
$$ 
 It is easy to see that, for a fixed $n=p_1p_2\ldots p_k$ and $\sigma \in S_k'$, the product $$\sum\limits_{\chi} \overline{\chi(m_{\sigma (1)})}\chi(p_1) \sum\limits_{\chi} \overline{\chi(m_{\sigma (2)})}\chi(p_2)\ldots \sum\limits_{\chi} \overline{\chi(m_{\sigma (k)})}\chi(p_k)$$ is non-zero if and only if  $p_i \equiv m_{\sigma(i)}$ for all $i=1,\ldots,k$. The orthogonality relation tells us that this non-zero quantity is $\phi(N)$ for each $i$. Therefore, for each $n=p_1p_2\ldots p_k$, the inner double sum is $\phi(N)^k$ if, for some $\sigma \in S'_k$, we have  $p_i \equiv m_{\sigma(i)}$ for every $i$ and zero otherwise. Observe that this can happen for at most one permutation $\sigma \in S_k'$.

\bigskip
The following are auxiliary functions that will appear in the proof:\\
$1. \Pi_{k,{\chi},\m_{[k]}} (x) = \dfrac{1}{\phi(N)^k} \sum\limits_{p_1p_2\ldots p_k \leq x}  \boldsymbol{\chi}_{\m_{[k]}}.$
\\
\\
$2. \vartheta_{k,\chi,\m_{[k]}} (x) = \dfrac{1}{\phi(N)^k} \sum\limits_{p_1p_2\ldots p_k \leq x} \log(p_1p_2\ldots p_k)\boldsymbol{\chi}_{\m_{[k]}}.$
\\
\\
$3. L_{k,\chi,\m_{[k]}} (x) = \dfrac{1}{\phi(N)^k}\sum\limits_{p_1p_2\ldots p_k \leq x} \dfrac{1}{(p_1p_2\ldots p_k)}\boldsymbol{\chi}_{\m_{[k]}}.$
\\

%Observe that $ \Pi_{k,\chi,\m} (x)$ counts the number of $n=p_1p_2\ldots p_k$ such that  for \textit{some} permutation $\sigma \in S_k$, we have  $p_i \equiv m_{\sigma(i)}$ for all $i$. \\
By Dirichlet's theorem, we know that for $i\neq j$ the number of primes $p \equiv m_{\sigma(i)} \,\bmod N$ is asymptotically the same as the number of primes $p \equiv m_{\sigma(j)}\, \bmod N$. Thus, if we fix a permutation of $\{m_1, m_2,\ldots ,m_k\}$ , then the number of ordered sets $\{p_1, p_2\ldots p_k\}$ so that $p_i \equiv m_i\, \bmod N$ is equal to $ \dfrac{1}{M} \Pi_{k,\chi,\m_{[k]}} (x)$, where $M$ is the number of distinct permutations of the multiset $\{ m_1, m_2, \ldots m_k\}.$ 

\section{Towards a generalization of Dirichlet's density theorem}
The proof of Theorem \ref{mainthm} comes down to proving the following:
\begin{prop}
 \label{theta}
$\vartheta_{k,\chi,\m_{[k]}} (x) \sim \dfrac{M}{\phi(N)^k}kx(\log\log x)^{k-1} \quad (k\geq 2).
$
\end{prop}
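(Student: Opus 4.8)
The plan is to prove Proposition \ref{theta} by induction on $k$, the engine being a recursion that peels off one prime and rewrites $\vartheta_{k,\chi,\m_{[k]}}$ in terms of the counting functions $\Pi_{k-1,\chi,\m^i_{[k-1]}}$ of the reduced tuples. First I would record what the character sum computes: by the orthogonality relation quoted above, $\boldsymbol{\chi}_{\m_{[k]}}$ equals $\phi(N)^k$ exactly when the residues $(p_1,\ldots,p_k)$ modulo $N$ form a rearrangement of the multiset $\{m_1,\ldots,m_k\}$ and vanishes otherwise, so the factor $\phi(N)^{-k}$ in the definition cancels this value and $\vartheta_{k,\chi,\m_{[k]}}(x)$ collapses to the sum of $\log(p_1\cdots p_k)$ over ordered prime tuples of product at most $x$ whose residue multiset is $\{m_1,\ldots,m_k\}$. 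Writing $\log(p_1\cdots p_k)=\sum_i \log p_i$ and exploiting the fact that this residue condition is symmetric under permuting the coordinates, the sum equals $k$ times the contribution of the last coordinate. Fixing $p_k=p$ with $p\equiv r$ for a residue $r$ occurring in $\m_{[k]}$ forces the remaining $k-1$ primes to have residue multiset $\m^{i(r)}_{[k-1]}$, where $i(r)$ is any index with $m_{i(r)}=r$. This gives the exact identity $\vartheta_{k,\chi,\m_{[k]}}(x)=k\sum_r \sum_{p\le x,\,p\equiv r}(\log p)\,\Pi_{k-1,\chi,\m^{i(r)}_{[k-1]}}(x/p)$, the outer sum running over the distinct residues $r$ appearing in $\m_{[k]}$.

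Next I would feed in the level-$(k-1)$ asymptotics. The base case $k=1$ is precisely de la Vall\'ee-Poussin's theorem, $\sum_{p\le x,\,p\equiv m_1}\log p\sim x/\phi(N)$; for $k\ge 2$ the induction hypothesis (Theorem \ref{mainthm} at $k-1$, equivalently the counting form of Landau's estimate (\ref{Landau}) refined to residue classes) supplies $\Pi_{k-1,\chi,\m'_{[k-1]}}(y)\sim \frac{M'}{\phi(N)^{k-1}}(k-1)\frac{y(\log\log y)^{k-2}}{\log y}$, with $M'$ the number of distinct arrangements of $\m'_{[k-1]}$. Substituting this and writing the inner sum over $p$ as a Stieltjes integral against $d(\sum_{p\le t,\,p\equiv r}\frac{\log p}{p})$, which by Mertens' theorem in arithmetic progressions is asymptotic to $\frac{1}{\phi(N)}\frac{dt}{t}$, the $r$-summand becomes $\frac{M'_r}{\phi(N)^k}(k-1)\,x\int_2^x \frac{(\log\log(x/t))^{k-2}}{\log(x/t)}\frac{dt}{t}$, where $M'_r$ is the number of arrangements of $\m_{[k]}$ with one copy of $r$ removed. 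The substitution $v=\log(x/t)$ turns the integral into $\int \frac{(\log v)^{k-2}}{v}\,dv\sim \frac{(\log\log x)^{k-1}}{k-1}$; this is the decisive step where the exponent is raised from $k-2$ to $k-1$.

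Assembling the pieces, each $r$-summand contributes $\frac{M'_r}{\phi(N)^k}x(\log\log x)^{k-1}$, so $\vartheta_{k,\chi,\m_{[k]}}(x)\sim \frac{k\,x(\log\log x)^{k-1}}{\phi(N)^k}\sum_r M'_r$, and the proof closes with the combinatorial identity $\sum_r M'_r=M$: if $\m_{[k]}$ has value-multiplicities $\mu_1,\ldots,\mu_s$ then $M=k!/\prod_j\mu_j!$, while removing one copy of the value of multiplicity $\mu_j$ gives $M'=\mu_j\,(k-1)!/\prod_j\mu_j!$, whence $\sum_r M'_r=\big((k-1)!/\prod_j\mu_j!\big)\sum_j \mu_j=M$. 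I expect the main obstacle to be uniformity rather than algebra: the level-$(k-1)$ asymptotic for $\Pi_{k-1,\chi,\m'}(y)$ is needed for all $y=x/p$ down to small values, so I would split the $p$-sum at $p\le x/(\log x)^{A}$, bound the tail $p>x/(\log x)^{A}$ (where $x/p$ is small and the asymptotic degrades) by crude estimates to show it is $o(x(\log\log x)^{k-1})$, and control the accumulated error terms through the Stieltjes integration so they do not swamp the main term. The other point requiring care is the bookkeeping of multiplicities in $\m_{[k]}$, namely distinguishing the sum over distinct residues $r$ from a naive sum over the $k$ indices.
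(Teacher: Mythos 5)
Your proposal is correct in outline, but it follows a genuinely different route from the paper. The paper uses Wright's device: after setting up the recursion for $\vartheta_{k,\chi,\m_{[k]}}$ (Lemma \ref{recursion-theta}), it does \emph{not} substitute the level-$(k-1)$ asymptotic back into it. Instead it introduces $f_{k,\chi,\m_{[k]}}(x)=\phi(N)^k\vartheta_{k,\chi,\m_{[k]}}(x)-xk\phi(N)^{k-1}{\sum}'_i L_{k-1,\chi,\m^i_{[k-1]}}(x)$, shows $f$ satisfies its own clean recursion (Lemma \ref{recursion-f}), proves $f_{k,\chi,\m_{[k]}}(x)=o\{x(\log\log x)^{k-1}\}$ by an induction that only requires splitting the sum over $p$ at a \emph{constant} threshold $x/x_0$ (Lemma \ref{f-estimate}), and extracts the main term from the elementary sandwich estimate $L_{k,\chi,\m_{[k]}}(x)\sim\frac{M}{\phi(N)^k}(\log\log x)^k$ (Lemma \ref{L-estimate}) together with the same multinomial identity $\sum_r M'_r=M$ that you use. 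The whole point of Wright's arrangement is to sidestep exactly the uniformity problem you flag: no asymptotic for $\Pi_{k-1}(x/p)$ is ever needed when $x/p$ is small. Your Landau-style direct substitution, with the Mertens--Stieltjes computation $\int (\log v)^{k-2}v^{-1}\,dv\sim(\log\log x)^{k-1}/(k-1)$ raising the exponent, is a valid alternative and arguably more transparent about where the extra $\log\log x$ comes from; but be aware that (i) the induction is more naturally carried on the asymptotic for $\Pi_{k-1}$ (which the paper only derives from $\vartheta_{k-1}$ by partial summation in Section 4), and (ii) your tail bound needs slightly more than the trivial estimate when $k=2$: with the cutoff $p\le x/(\log x)^A$, bounding $\Pi_1(x/p)$ by $x/p$ gives a tail of order $x\log\log x$, which is the same size as the main term, so you need at least a Chebyshev-type bound $\pi(y;N,r)\ll y/\log 2y$ (and, for larger $k$, either the trivial bound, which then suffices, or a uniform Hardy--Ramanujan-type bound). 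With that ingredient named, your argument closes; the paper's approach buys freedom from any such uniform input at the cost of the auxiliary functions $f$ and $L$.
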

\noindent
The proof of this proposition will follow after a series of lemmas.\\
First, we prove a recursive relation for $\vartheta_{k,\chi,\m_{[k]}} (x)$:
\begin{lemma} \label{recursion-theta}
For $k\geq 1,$ $$ k\vartheta_{k+1,\chi,\m_{[k+1]}} (x) = (k+1)\sum\limits_{p\leq x} \dfrac{1}{\phi(N)}{\mathop{\sum}}'_{i} \left( \sum\limits_{\chi} \overline{\chi(m_i)}\chi(p)\vartheta_{k,\chi,\m^i_{[k]}} \left(\dfrac{x}{p}\right)\right),$$
where the dash on top of the second summation symbol denotes that only those $i=1,\ldots,k$ are counted so that the $\m^i_{[k]}$ are distinct.
\end{lemma}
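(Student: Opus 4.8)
The plan is to strip the character sums down to the combinatorial content supplied by the Note, and then to run Wright's splitting of the logarithm. First I would record that, by the orthogonality relation, $\tfrac{1}{\phi(N)^k}\boldsymbol{\chi}_{\m_{[k]}}$ evaluated at an ordered tuple $(p_1,\ldots,p_k)$ is exactly the indicator that the multiset of residues $\{p_1,\ldots,p_k\}\bmod N$ equals the multiset $\{m_1,\ldots,m_k\}$. Consequently
$$\vartheta_{k,\chi,\m_{[k]}}(x) = \sum_{\substack{p_1 p_2\cdots p_k \leq x \\ \{p_j \bmod N\}=\{m_j\}}} \log(p_1 p_2\cdots p_k),$$
where the sum runs over ordered $k$-tuples of primes in the sense of the Preliminaries. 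The crucial structural feature is that this summand is symmetric under permutations of the $p_j$, and I will exploit this symmetry twice.

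Next I would apply the elementary identity
$$k\log(p_1 p_2\cdots p_{k+1}) = \sum_{l=1}^{k+1}\log\Bigl(\prod_{j\neq l} p_j\Bigr),$$
which is immediate from $\sum_{l=1}^{k+1}\bigl(\log(p_1\cdots p_{k+1})-\log p_l\bigr)=k\log(p_1\cdots p_{k+1})$. Substituting this into the expression above for $k\,\vartheta_{k+1,\chi,\m_{[k+1]}}(x)$ and using symmetry in the $p_j$, each of the $k+1$ summands over $l$ contributes equally, so the total collapses to $(k+1)$ times the $l=k+1$ term. Setting $p=p_{k+1}$, this equals
$$(k+1)\sum_{p\leq x}\ \sum_{\substack{p_1 \cdots p_k \leq x/p\\ \{p_1,\ldots,p_k,p\}\equiv\{m_1,\ldots,m_{k+1}\}}}\log(p_1 \cdots p_k).$$

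I would then fix $p$ and factor the residue condition. For the combined multiset $\{p_1,\ldots,p_k,p\}$ to equal $\{m_1,\ldots,m_{k+1}\}$ it is necessary that $p\equiv m_i\bmod N$ for some $i$, after which the surviving constraint on $(p_1,\ldots,p_k)$ is precisely that their residues realize the reduced multiset $\m^i_{[k]}$. Recognizing $\tfrac{1}{\phi(N)}\sum_\chi\overline{\chi(m_i)}\chi(p)=\mathbf{1}[p\equiv m_i\bmod N]$ and the remaining inner sum as $\vartheta_{k,\chi,\m^i_{[k]}}(x/p)$ then yields the stated recursion.

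The one genuine subtlety, and the step I expect to be the main obstacle, is the bookkeeping when the $m_i$ are not distinct. Since $\m^i_{[k]}=\m^{i'}_{[k]}$ exactly when $m_i=m_{i'}$, a prime $p$ whose residue occurs with multiplicity $r$ among the coordinates of $\m_{[k+1]}$ would be counted $r$ times if one summed over all $i$; restricting to distinct $\m^i_{[k]}$, as the dash on $\sum'_i$ prescribes, records each admissible reduced multiset once and matches the single contribution arising from fixing $p$ in the collapsed sum. Checking this correspondence carefully is exactly what forces the constant $k/(k+1)$ to emerge, and it is the place where the argument must be written with care.
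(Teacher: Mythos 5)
Your proof is correct and follows essentially the same route as the paper: the identity $k\log(p_1\cdots p_{k+1})=\sum_{l}\log\bigl(\prod_{j\neq l}p_j\bigr)$, the symmetry collapse to $(k+1)$ times a single term, and the decomposition of the residue condition over ${\mathop{\sum}}'_{i}$ are exactly the paper's steps, with the character sums rewritten in the equivalent indicator-function language already supplied by the paper's Note. (Only your closing remark is slightly off: the factor $k/(k+1)$ comes from the logarithm identity together with the symmetry collapse, not from the multiset bookkeeping, whose role is merely to ensure that no spurious multiplicity factors appear when a residue occurs more than once in $\m_{[k+1]}$.)
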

\begin{proof}
\begin{equation*}
\begin{split}
(k+1) \vartheta_{k+1,\chi,\m_{[k+1]}} (x) &= \dfrac{1}{\phi(N)^{k+1}} \sum\limits_{p_1p_2\ldots p_{k+1} \leq x} (k+1) \log(p_1p_2\ldots p_{k+1})\boldsymbol{\chi}_{\m_{[k+1]}}\\
&= \dfrac{1}{\phi(N)^{k+1}} \sum\limits_{p_1p_2\ldots p_{k+1} \leq x} \boldsymbol{\chi}_{\m_{[k+1]}} (\log p_1 + \log(p_2p_3\ldots p_{k+1}) + \log p_2+ \log(p_1p_3\ldots p_{k+1}) \\
& \qquad \qquad \qquad \qquad \qquad +\ldots + \log p_{k+1}+ \log(p_1p_2\ldots p_{k}) )\\
&= \dfrac{1}{\phi(N)^{k+1}} \sum\limits_{p_1p_2\ldots p_{k+1} \leq x} \log(p_1p_2\ldots p_{k+1})\boldsymbol{\chi}_{\m_{[k+1]}} \\
& \quad \quad \quad + \dfrac{1}{\phi(N)^{k+1}} \sum\limits_{p_1p_2\ldots p_{k+1} \leq x} (\log(p_2p_3\ldots p_{k+1}) + \ldots+\log(p_1p_2\ldots p_{k}) )\boldsymbol{\chi}_{\m_{[k+1]}}\\
&= \dfrac{1}{\phi(N)^{k+1}} \sum\limits_{p_1p_2\ldots p_{k+1} \leq x} \log(p_1p_2\ldots p_{k+1})\boldsymbol{\chi}_{\m_{[k+1]}} + \dfrac{(k+1)}{\phi(N)^{k+1}} \sum\limits_{p_1p_2\ldots p_{k+1} \leq x} \log(p_2p_3\ldots p_{k+1}) \boldsymbol{\chi}_{\m_{[k+1]}}
\end{split}
\end{equation*}

The first sum is just $\vartheta_{k+1,\chi,\m_{[k+1]}} (x)$ and this reduces the left hand side to $k\vartheta_{k+1,\chi,\m_{[k+1]}} (x)$.\\
In the second sum, observe that the $\boldsymbol{\chi}_{\m_{[k+1]}}$ appearing is a $(k+1)$-tuple. Collecting the terms corresponding to $p_1$ in $\boldsymbol{\chi}_{\m_{[k+1]}}$, the second term can be written as follows.\\
$$\sum\limits_{p_1p_2\ldots p_{k+1} \leq x} \log(p_2p_3\ldots p_{k+1}) \boldsymbol{\chi}_{\m_{[k+1]}} = {\mathop{\sum}}'_{i} \sum\limits_{p_1p_2\ldots p_{k+1} \leq x} \log(p_2p_3\ldots p_{k+1}) \boldsymbol{\chi}_{{\m}^i_{[k]}}\left(\sum\limits_{\chi}\overline{\chi(m_i)}\chi(p_1)\right).$$
\text{Simplifying, we get}
$$k\vartheta_{k+1,\chi,\m_{[k+1]}} (x) = (k+1)\sum\limits_{p\leq x} \dfrac{1}{\phi(N)}{\mathop{\sum}}'_{i} \left( \sum\limits_{\chi} \overline{\chi(m_i)}\chi(p)\vartheta_{k,\chi,\m^i_{[k]}} \left(\dfrac{x}{p}\right)\right).$$

\end{proof}
Similarly, we prove a recursion formula for the function $L_{k,\chi,\m_{[k]}}(x)$:
\begin{lemma} \label{recursion-L}
Let $L_{0,\chi,\m_{[0]}} (x)= 1.$ Then for $k\geq 1$, $$L_{k,\chi,\m_{[k]}}(x) = \sum\limits_{p\leq x} \dfrac{1}{p}{\mathop{\sum}}'_{i} \dfrac{1}{\phi(N)}\sum\limits_{\chi} \overline{\chi(m_i)}\chi(p) L_{k-1,\chi,\m^i_{[k-1]}}\left(\dfrac{x}{p}\right),$$
where the dash on top of the second summation symbol is as defined in Lemma \ref{recursion-theta}.
\end{lemma}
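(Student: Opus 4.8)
The plan is to mimic the computation of Lemma \ref{recursion-theta}, the argument being in fact simpler here since there is no logarithmic weight and hence no need to split $\log(p_1p_2\ldots p_k)$ into a sum. First I would start from the definition
$$\phi(N)^k L_{k,\chi,\m_{[k]}}(x) = \sum_{p_1p_2\ldots p_k \leq x} \frac{1}{p_1p_2\ldots p_k}\boldsymbol{\chi}_{\m_{[k]}},$$
and single out the factor attached to the prime $p_1$. Exactly as in Lemma \ref{recursion-theta}, collecting the terms corresponding to $p_1$ inside $\boldsymbol{\chi}_{\m_{[k]}}$ allows me to write
$$\boldsymbol{\chi}_{\m_{[k]}} = {\mathop{\sum}}'_i \left(\sum_\chi \overline{\chi(m_i)}\chi(p_1)\right)\boldsymbol{\chi}_{\m^i_{[k-1]}},$$
where $\boldsymbol{\chi}_{\m^i_{[k-1]}}$ is the analogous expression in the primes $p_2,\ldots,p_k$ for the reduced tuple $\m^i_{[k-1]}$, and the dash restricts the sum to those $i$ for which the tuples $\m^i_{[k-1]}$ are distinct.

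Next I would factor the range of summation. Writing $p_1 = p$ and using $\frac{1}{p_1p_2\ldots p_k} = \frac{1}{p}\cdot\frac{1}{p_2\ldots p_k}$ together with the constraint $p_2\ldots p_k \leq x/p$, the sum splits as
$$\phi(N)^k L_{k,\chi,\m_{[k]}}(x) = \sum_{p\leq x}\frac{1}{p}{\mathop{\sum}}'_i \left(\sum_\chi \overline{\chi(m_i)}\chi(p)\right)\sum_{p_2\ldots p_k \leq x/p}\frac{1}{p_2\ldots p_k}\boldsymbol{\chi}_{\m^i_{[k-1]}}.$$
By the definition of $L_{k-1,\chi,\m^i_{[k-1]}}$, the inner prime sum equals $\phi(N)^{k-1}L_{k-1,\chi,\m^i_{[k-1]}}(x/p)$, so dividing through by $\phi(N)^k$ produces the claimed recursion. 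The base case $k=1$ is then consistent with the definition of $L_1$ once one sets $L_{0,\chi,\m_{[0]}}(x) = 1$.

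The only delicate point, and the step where all the care is needed, is the passage from the full $\boldsymbol{\chi}_{\m_{[k]}}$ to the factored form over the reduced tuples: I must check that summing over the subset $S'_k$ of permutations producing distinct rearrangements of the multiset $\{m_1,\ldots,m_k\}$ is compatible with first choosing the value $m_i$ assigned to $p_1$ — with the dash guaranteeing distinctness — and then running over $S'_{k-1}$ for the remaining primes. This bookkeeping with the multiset structure and the $S'_k$ versus $S'_{k-1}$ distinctness conditions is identical to that in Lemma \ref{recursion-theta}; once it is settled, the remainder is a routine reindexing with no analytic input.
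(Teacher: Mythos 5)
Your proposal is correct and matches the paper's intent: the paper disposes of this lemma with the single line ``This follows directly from the definitions,'' and what you have written out --- collecting the $p_1$-factor of $\boldsymbol{\chi}_{\m_{[k]}}$ into ${\mathop{\sum}}'_i\bigl(\sum_\chi \overline{\chi(m_i)}\chi(p_1)\bigr)\boldsymbol{\chi}_{\m^i_{[k-1]}}$ and factoring the range of summation --- is exactly the computation implicit there, reusing the same bookkeeping over $S'_k$ versus $S'_{k-1}$ that the paper makes explicit in the proof of Lemma \ref{recursion-theta}. You correctly note that the absence of the logarithmic weight makes this case strictly easier (no symmetrization step is needed), so no further comment is required.
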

This follows directly from the definitions.\\

Let
\begin{equation} \label{f,theta,L}
f_{k,\chi,\m_{[k]}}(x) = \phi(N)^k \vartheta_{k,\chi,\m_{[k]}} (x) -x k\phi(N)^{k-1} {\mathop{\sum}}'_{i} L_{k-1,\chi,\m^i_{[k-1]}} (x). 
\end{equation} 
The idea is to first estimate $f_{k,\chi,\m_{[k]}}(x)$ and $L_{k,\chi,\m_{[k]}}(x)$. Plugging in these estimates into Equation (\ref{f,theta,L}) would then give an asymptotic formula for $\theta_{k,\chi,\m_{[k]}} (x)$ thus proving Proposition \ref{theta}. With this in mind, we first prove a recursion formula for $f_{k,\chi,\m_{[k]}}(x).$
\begin{lemma} \label{recursion-f}
 $$kf_{k+1,\chi,\m_{[k+1]}}(x) = (k+1) \sum\limits_{p\leq x} {\mathop{\sum}}'_{i} \sum\limits_{\chi} \overline{\chi(m_i)}\chi(p) f_{k,\chi,\m^i_{[k]}}\left(\dfrac{x}{p}\right).$$
 \end{lemma}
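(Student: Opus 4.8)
The plan is to expand both sides of the claimed identity through the defining relation (\ref{f,theta,L}) and then to reduce everything to the two recursions already established, Lemma \ref{recursion-theta} for $\vartheta$ and Lemma \ref{recursion-L} for $L$. Applying (\ref{f,theta,L}) at level $k+1$ and multiplying by $k$, the left-hand side splits as
$$k f_{k+1,\chi,\m_{[k+1]}}(x) = k\phi(N)^{k+1}\vartheta_{k+1,\chi,\m_{[k+1]}}(x) - xk(k+1)\phi(N)^{k}{\mathop{\sum}}'_{i}L_{k,\chi,\m^i_{[k]}}(x),$$
into a ``$\vartheta$-part'' and an ``$L$-part''. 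On the right-hand side I would insert the definition of $f_{k,\chi,\m^i_{[k]}}(x/p)$, which likewise produces a $\vartheta$-contribution and an $L$-contribution. The aim is then to match the two parts separately.

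The $\vartheta$-part is immediate: feeding Lemma \ref{recursion-theta} into $k\phi(N)^{k+1}\vartheta_{k+1,\chi,\m_{[k+1]}}(x)$ and cancelling a single factor of $\phi(N)$ yields exactly $(k+1)\phi(N)^{k}\sum_{p\le x}{\mathop{\sum}}'_{i}\sum_{\chi}\overline{\chi(m_i)}\chi(p)\vartheta_{k,\chi,\m^i_{[k]}}(x/p)$, which is precisely the $\vartheta$-contribution arising from the right-hand side. Hence these cancel with no further work.

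For the $L$-part I would substitute Lemma \ref{recursion-L} into ${\mathop{\sum}}'_{i}L_{k,\chi,\m^i_{[k]}}(x)$ on the left, and expand the $L$-term of each $f_{k,\chi,\m^i_{[k]}}(x/p)$ on the right. After collecting constants, both sides become a double sum over a prime $p\le x$ and over an ordered pair of deleted coordinates, every summand carrying the common factor $\tfrac1p\,\chi(p)\,L_{k-1,\chi,\cdot}(x/p)$, where the $(k-1)$-tuple in the last slot is obtained from $\m_{[k+1]}$ by deleting those two coordinates. The sole discrepancy is where the Dirichlet weight $\overline{\chi(m_\bullet)}$ sits: on the left it attaches to the coordinate removed at the inner (second) step, while on the right it attaches to the coordinate removed first.

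The crux of the lemma, and the step I expect to be the main obstacle, is to reconcile these two placements of the character weight. Since deleting two coordinates in either order produces the same $(k-1)$-tuple, the factor $\chi(p)\,L_{k-1,\chi,\cdot}(x/p)$ is symmetric under interchanging the two deleted indices; interchanging the two dummy indices in the double sum therefore transports the weight from one coordinate to the other, identifying the two $L$-parts. What must be checked carefully is that this transposition genuinely respects the primed summation ranges: one has to verify that running over $i$ (so that the tuples $\m^i_{[k]}$ are distinct) and then over the inner index (so that the twice-reduced tuples are distinct) yields a collection of ordered pairs invariant under the swap, so that the distinctness bookkeeping introduces no over- or under-counting. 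Once this symmetrization is justified, the $L$-parts coincide and, together with the matched $\vartheta$-parts, the recursion of Lemma \ref{recursion-f} follows.
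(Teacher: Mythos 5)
Your proposal follows the paper's proof essentially verbatim: expand $kf_{k+1,\chi,\m_{[k+1]}}(x)$ via Equation (\ref{f,theta,L}), feed Lemma \ref{recursion-theta} into the $\vartheta$-part and Lemma \ref{recursion-L} into the $L$-part, and recombine to recognize $f_{k,\chi,\m^i_{[k]}}(x/p)$ inside the sum. The relocation of the character weight in the $L$-part that you single out as the crux is carried out silently in the paper (the factor $\overline{\chi(m_j)}\chi(p)$ produced by Lemma \ref{recursion-L} simply reappears as $\overline{\chi(m_i)}\chi(p)$ after the implicit relabelling of the two deleted coordinates), so your insistence on checking that the primed summation ranges are invariant under that swap makes you more careful than the source, not different from it.
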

\begin{proof}\\
From the definition of $f_{k,\chi,\m_{[k]}}(x)$, we have\\
$kf_{k+1,\chi,\m_{[k+1]}}(x) = k \phi(N)^{k+1} \vartheta_{k+1,\chi,\m_{[k+1]}} (x) -x k(k+1) \phi(N)^k {\mathop{\sum}}'_{i} L_{k,\chi,\m^i_{[k]}} (x).$\\
We evaluate the two summands using Lemma \ref{recursion-theta} and Lemma \ref{recursion-L} proved above.\\
By Lemma \ref{recursion-theta} we have $$k \phi(N)^{k+1} \vartheta_{k+1,\chi,\m_{[k+1]}} (x) = \phi(N)^{k+1}(k+1)\sum\limits_{p\leq x} \dfrac{1}{\phi(N)}{\mathop{\sum}}'_{i}\left( \sum\limits_{\chi} \overline{\chi(m_i)}\chi(p)\vartheta_{k,\chi,\m^i_{[k]}} \left(\dfrac{x}{p}\right)\right),$$
which simplifies to
$$(k+1) \sum\limits_{p\leq x} {\mathop{\sum}}'_{i}\sum\limits_{\chi} \overline{\chi(m_i)}\chi(p) \left[ \phi(N)^k \vartheta_{k,\chi,\m^i_{[k]}} \left(\dfrac{x}{p}\right) \right].$$
Also using Lemma \ref{recursion-L},\\
$${\mathop{\sum}}'_{i} L_{k,\chi,\m^i_{[k]}} (x) = \sum\limits_{i=1}^{k+1} \sum\limits_{p\leq x} \dfrac{1}{p} {\mathop{\sum}}'_{j} \dfrac{1}{\phi(N)} \sum\limits_{\chi} \overline{\chi(m_j)}\chi(p) L_{k-1,\chi,\m^{i,j}_{[k-1]}} \left(\dfrac{x}{p}\right),$$
where $\m^{i,j}_{[k-1]}$ denotes $\m^{i}_{[k]}$ with the $j$-th coordinate removed and ${\mathop{\sum}}'_{j}$ denotes that only distinct $\m^{i,j}_{[k-1]}$ are counted. 

Therefore, 
$$x k(k+1) \phi(N)^k {\mathop{\sum}}'_{i} L_{k,\chi,\m^i_{[k]}} (x) = (k+1) \sum\limits_{p\leq x} {\mathop{\sum}}'_{i}\sum\limits_{\chi} \overline{\chi(m_i)}\chi(p)\left[k\phi(N)^{k-1} \dfrac{x}{p} {\mathop{\sum}}'_{j} L_{k-1,\chi,\m^{i,j}_{[k-1]}} \left(\dfrac{x}{p}\right)\right].$$
Putting the two summands together, we obtain the result.
\end{proof}
\noindent
Next, we use Lemma \ref{recursion-f} to get an estimate for $f_{k,\chi, \m_{[k]}} (x)$.
\begin{lemma} \label{f-estimate}
Let $k\geq 1$. Then 
$$f_{k,\chi,\m_{[k]}} (x) = o\{x (\log\log x)^{k-1}\}.$$
\end{lemma}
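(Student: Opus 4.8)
The plan is to prove the estimate by induction on $k$, using the recursion for $f_{k,\chi,\m_{[k]}}$ established in Lemma \ref{recursion-f}.

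\textbf{Base case.} For $k=1$ one reads off from Equation (\ref{f,theta,L}) that $f_{1,\chi,\m_{[1]}}(x) = \phi(N)\vartheta_{1,\chi,\m_{[1]}}(x) - x$, since $L_{0,\chi,\m_{[0]}}(x)=1$ and the dashed sum has a single term. Unwinding the definition of $\vartheta_{1,\chi,\m_{[1]}}$ and applying the orthogonality relation gives $\phi(N)\vartheta_{1,\chi,\m_{[1]}}(x) = \phi(N)\sum_{p\leq x,\,p\equiv m_1}\log p$. De la Vall\'ee-Poussin's form of the prime number theorem for arithmetic progressions yields $\sum_{p\leq x,\,p\equiv m_1}\log p \sim x/\phi(N)$, whence $f_{1,\chi,\m_{[1]}}(x)=o(x)=o(x(\log\log x)^{0})$, as required.

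\textbf{Inductive step.} Assuming the estimate at level $k$ for every subtuple that can occur, I would bound $f_{k+1,\chi,\m_{[k+1]}}(x)$ via Lemma \ref{recursion-f}. Taking absolute values and using that $\bigl|\sum_{\chi}\overline{\chi(m_i)}\chi(p)\bigr|\leq\phi(N)$ (this sum equals $\phi(N)$ when $p\equiv m_i$ and $0$ otherwise), together with the fact that $\mathop{\sum}'_{i}$ has at most $k+1$ terms, reduces everything to estimating $\sum_{p\leq x}\bigl|f_{k,\chi,\m^i_{[k]}}(x/p)\bigr|$ for each of the finitely many subtuples $\m^i_{[k]}$. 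Fixing $\delta>0$, the inductive hypothesis (applied uniformly over these finitely many subtuples) furnishes an $x_0$ with $|f_{k,\chi,\m^i_{[k]}}(y)| < \delta\, y(\log\log y)^{k-1}$ for $y\geq x_0$, while $|f_{k,\chi,\m^i_{[k]}}(y)|\leq C$ for $1\leq y\leq x_0$. Splitting the $p$-sum at $x/x_0$ and using $\log\log(x/p)\leq\log\log x$, the range $p\leq x/x_0$ contributes at most
$$\delta\, x(\log\log x)^{k-1}\sum_{p\leq x}\frac{1}{p} = \delta\, x(\log\log x)^{k-1}\bigl(\log\log x + O(1)\bigr)$$
by Mertens' theorem, which is $\leq 2\delta\, x(\log\log x)^{k}$ for large $x$; the range $x/x_0 < p\leq x$ gives $x/p<x_0$, so each of the at most $\pi(x)=O(x/\log x)$ terms is bounded by $C$, contributing $o(x(\log\log x)^k)$. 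Combining these, $|f_{k+1,\chi,\m_{[k+1]}}(x)|\leq A\,\delta\, x(\log\log x)^{k}$ for a constant $A=A(k,N)$ and all large $x$, and since $\delta>0$ is arbitrary this gives $f_{k+1,\chi,\m_{[k+1]}}(x)=o(x(\log\log x)^{k})$, closing the induction.

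\textbf{Main obstacle.} The delicate point is the treatment of the factor $\log\log(x/p)$: it must be absorbed into $(\log\log x)^{k-1}$ cleanly enough that the extra $\sum_{p\leq x}1/p$ supplied by Mertens' theorem produces \emph{exactly one} additional power of $\log\log x$ and no more. Equally essential is the uniformity of the inductive bound across the finitely many subtuples $\m^i_{[k]}$ appearing in the dashed sum, which is what lets a single threshold $x_0$ and a single constant $C$ govern all terms simultaneously.
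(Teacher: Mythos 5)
Your proposal is correct and follows essentially the same route as the paper: induction on $k$ with the base case from de la Vall\'ee-Poussin's theorem, then the recursion of Lemma \ref{recursion-f} with the $p$-sum split at $x/x_0$, the inductive bound plus Mertens' theorem supplying exactly one extra factor of $\log\log x$ on the range $p\leq x/x_0$, and the trivial bound $O(\pi(x))$ on the complementary range. The only difference is presentational: you make explicit the uniformity of the threshold over the finitely many subtuples, which the paper leaves implicit.
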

\begin{proof}
We induct on $k$.\\
When $k=1$, writing $\m_{[1]} =m$,  $$f_{1,\chi,m}(x) = \phi(N)\vartheta_{1,\chi,m}(x) - x.$$
From Dirichlet's theorem on the density of primes in an arithmetic progression, $\vartheta_{1,\chi,m} (x)\sim \frac{1}{\phi(N)}x$ and so  $$f_{1,\chi,m}(x) = o(x).$$
Suppose the claim were true for $k=K$, where $K> 1$. This means for any $\varepsilon > 0$, there exists $x_0 = x_0(K,\varepsilon)$ such that $$|f_{K,\chi,\m_{[K]}}(x)| < \varepsilon x(\log\log x)^{K-1} \quad \qquad \forall x\geq x_0.$$
Also, for $1\leq x <x_0$, from the definition of $f_{K,\chi,\m_{[K]}}$, we can find a real number $D$ depending on $K,\varepsilon$ so that $$|f_{K,\chi,\m_{[K]}}(x)|<D.$$
Using the above we deduce \\
\begin{enumerate}
\item{For $p\leq \frac{x}{x_0}$,
\begin{equation*}
\begin{split}
\sum\limits_{p\leq \frac{x}{x_0}} \left| \sum\limits_{i=1}^{K+1} \sum\limits_{\chi} \overline{\chi(m_i)}\chi(p)f_{K,\chi,\m^i_{[K]}} \left(\dfrac{x}{p}\right) \right| &< (K+1) \phi(N)\varepsilon(\log\log x)^{K-1}\sum\limits_{p\leq \frac{x}{x_0}}\dfrac{x}{p}\\
&<(K+2)\phi(N)\varepsilon x(\log\log x)^K \qquad \text{for $x$ large enough.}
\end{split}
\end{equation*}

}
\item{
For $\frac{x}{x_0}<p\leq x$,
$$\sum\limits_{\frac{x}{x_0}<p\leq x} \left| \sum\limits_{i=1}^{K+1} \sum\limits_{\chi} \overline{\chi(m_i)}\chi(p)f_{K,\chi,\m^i_{[K]}} \left(\dfrac{x}{p}\right)\right| < (K+1)\phi(N)D\pi(x) <(K+1)\phi(N)Dx.$$

}
\end{enumerate}
Hence, using Lemma \ref{recursion-f} and the simple inequality $(K+1) < 2K$ for $K>1$, we have\\
$K|f_{K+1,\chi,\m_{[K+1]}}(x)| < 2K\phi(N)x((K+2)\varepsilon (\log\log x)^k + (K+1)D).$\\
Thus, for $x>x_1(D,\varepsilon,K)$ we conclude 
$$|f_{K+1,\chi,\m_{[K+1]}}(x)| < 2(K+2)\phi(N)\varepsilon x (\log\log x)^K.$$
Since $\varepsilon$ was arbitrary, the claim follows for all $k\in \N$ by induction.
\end{proof}
To complete the proof of Proposition \ref{theta}, it suffices to prove:
\begin{lemma} \label{L-estimate}
$$L_{k,\chi,\m_{[k]}}(x) \sim \dfrac{M}{\phi(N)^k}(\log\log x)^k.$$
\end{lemma}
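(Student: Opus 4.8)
The plan is to induct on $k$ using the recursion of Lemma \ref{recursion-L}. The key simplification comes from the orthogonality relation recorded in the Preliminaries: since each $m_i$ is a unit, the inner factor $\frac{1}{\phi(N)}\sum_{\chi}\overline{\chi(m_i)}\chi(p)$ equals $1$ when $p\equiv m_i \bmod N$ and $0$ otherwise. Hence the recursion collapses to
$$L_{k,\chi,\m_{[k]}}(x) = {\mathop{\sum}}'_{i}\ \sum_{\substack{p\le x \\ p\equiv m_i \bmod N}}\frac{1}{p}\, L_{k-1,\chi,\m^i_{[k-1]}}\!\left(\frac{x}{p}\right),$$
where the outer sum runs over those $i$ yielding distinct $\m^i_{[k-1]}$. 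For the base case $k=1$ one has $L_{0,\chi,\m_{[0]}}\equiv 1$, so $L_{1,\chi,m}(x)=\sum_{p\le x,\ p\equiv m}\frac{1}{p}$; the Mertens-type estimate for primes in an arithmetic progression (a consequence of the de la Vall\'ee-Poussin theorem already in hand) gives $\sum_{p\le x,\ p\equiv m}\frac1p \sim \frac{1}{\phi(N)}\log\log x$, which is the claim since $M=1$ for a singleton.

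For the inductive step I would assume the estimate for $k-1$, writing $M_i$ for the number of distinct permutations of the multiset $\m^i_{[k-1]}$, so that $L_{k-1,\chi,\m^i_{[k-1]}}(y)\sim \frac{M_i}{\phi(N)^{k-1}}(\log\log y)^{k-1}$, and then estimate each inner sum over $p$ by splitting the range at $\sqrt{x}$. For $p\le \sqrt{x}$ one has $x/p\ge \sqrt{x}$, whence $\log\log(x/p)\sim \log\log x$ uniformly in $p$; combining the inductive hypothesis with the Mertens estimate $\sum_{p\le \sqrt{x},\ p\equiv m_i}\frac1p \sim \frac{1}{\phi(N)}\log\log x$, this part contributes $\sim \frac{M_i}{\phi(N)^{k}}(\log\log x)^{k}$. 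For the tail $\sqrt{x}<p\le x$ I would use the crude uniform bound $L_{k-1,\chi,\m^i_{[k-1]}}(y)=O\big((\log\log x)^{k-1}\big)$, valid for $1\le y\le x$ (using that $L_{k-1}$ is bounded on bounded ranges and obeys the inductive asymptotic beyond a threshold), together with $\sum_{\sqrt{x}<p\le x}\frac1p = \log\log x - \log\log\sqrt{x} + o(1) = O(1)$, so the tail is $O\big((\log\log x)^{k-1}\big)=o\big((\log\log x)^{k}\big)$ and is absorbed.

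Summing over the distinct indices then yields $L_{k,\chi,\m_{[k]}}(x)\sim \frac{(\log\log x)^{k}}{\phi(N)^{k}}\,{\mathop{\sum}}'_{i} M_i$, and it remains to verify the combinatorial identity ${\mathop{\sum}}'_{i} M_i = M$. If the distinct values among $m_1,\dots,m_k$ occur with multiplicities $n_1,\dots,n_r$, then $M=k!/(n_1!\cdots n_r!)$, each distinct $\m^i_{[k-1]}$ arises by decrementing exactly one multiplicity, and
$$\sum_{j=1}^{r}\frac{(k-1)!}{n_1!\cdots (n_j-1)!\cdots n_r!} = \frac{(k-1)!}{\prod_j n_j!}\sum_{j=1}^{r} n_j = \frac{k\,(k-1)!}{\prod_j n_j!} = M,$$
which closes the induction. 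I expect the main obstacle to be making the two range estimates rigorous with uniform error control: in particular, phrasing the $p\le\sqrt{x}$ part as an $\varepsilon$-argument in the style of Lemma \ref{f-estimate}, so that the combined error from replacing $\log\log(x/p)$ by $\log\log x$ and from invoking the inductive ``$\sim$'' is genuinely $o\big((\log\log x)^{k}\big)$, rather than merely controlling the tail.
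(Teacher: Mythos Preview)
Your inductive argument via Lemma \ref{recursion-L} is correct, but the paper proceeds quite differently. Rather than inducting, the paper sandwiches $L_{k,\chi,\m_{[k]}}(x)$ directly: relaxing the constraint $p_1\cdots p_k\le x$ to $p_i\le x$ for each $i$ yields the upper bound $M\prod_{i=1}^k\sum_{p\le x,\,p\equiv m_i}\frac1p$, while tightening it to $p_i\le x^{1/k}$ for each $i$ yields the matching lower bound $M\prod_{i=1}^k\sum_{p\le x^{1/k},\,p\equiv m_i}\frac1p$; since $\log\log x^{1/k}\sim\log\log x$, both bounds are $\sim\frac{M}{\phi(N)^k}(\log\log x)^k$ by the Mertens-in-progressions estimate.

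The paper's route is shorter and avoids both the recursion and the multinomial identity you invoke. On the other hand, your approach mirrors the inductive treatment of $f_{k,\chi,\m_{[k]}}$ in Lemma \ref{f-estimate}, so your overall argument is more uniform in style; and the identity ${\sum}'_i M_i = M$ you prove is exactly the fact the paper defers to the Remark following Proposition \ref{theta}, so you have in effect front-loaded that step. One small point worth making explicit in your tail bound: the uniform estimate $L_{k-1,\chi,\m^i_{[k-1]}}(y)=O\big((\log\log x)^{k-1}\big)$ for $1\le y\le x$ is immediate once you note that $L_{k-1}$ is non-negative and non-decreasing in $y$ (since $\boldsymbol{\chi}_{\m^i_{[k-1]}}\ge 0$), so $L_{k-1}(y)\le L_{k-1}(x)$ and the inductive asymptotic at $x$ suffices.
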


 \begin{proof}
 Recall that $$L_{k,\chi,\m_{[k]}}(x) = \dfrac{1}{\phi(N)^k}\sum\limits_{p_1p_2\ldots p_k \leq x} \dfrac{1}{(p_1p_2\ldots p_k)}\boldsymbol{\chi}_{\m_{[k]}}$$
 $$= \dfrac{1}{\phi(N)^k}\sum\limits_{p_1p_2\ldots p_k \leq x} \dfrac{1}{(p_1p_2\ldots p_k)} \sum\limits_{\sigma \in S_k}\sum\limits_{\chi} \overline{\chi(m_{\sigma (1)})}\chi(p_1) \sum\limits_{\chi} \overline{\chi(m_{\sigma (2)})}\chi(p_2)\ldots  \sum\limits_{\chi}\overline{\chi(m_{\sigma (k)})}\chi(p_k)$$
 and that $M$ is the number of permutations of the (possible) multiset $\{m_1,m_2,\ldots,m_k\}.$
 \\
 
 We observe that the following hold:\\
 Given a squarefree number $n$ with $k$ factors, if  each prime $p$ dividing $n$ satisfies $p \leq x^{1/k}$ then $n\leq x$. This leads us to write $$L_{k,\chi,\m_{[k]}}(x) \geq M\prod\limits_{i=1}^{k} \sum\limits_{p\leq x^{1/k}}\dfrac{1}{p}\left(\dfrac{1}{\phi(N)}\sum\limits_{\chi} \overline{\chi(m_i)}\chi(p)\right),$$
 i.e.,
 $$L_{k,\chi,\m_{[k]}} \geq M\prod\limits_{i=1}^{k}\sum\limits_{{p\leq x^{1/k}} \atop {p\equiv m_i\, \bmod  N}}\dfrac{1}{p}.$$
 Similarly, if $n=p_1p_2\ldots p_k$ is less than $x$ then each $p_i \leq x,$ which gives us an upper bound:
 $$L_{k,\chi,\m_{[k]}}(x) \leq M\prod\limits_{i=1}^{k} \sum\limits_{p\leq x}\dfrac{1}{p}\left(\dfrac{1}{\phi(N)}\sum\limits_{\chi} \overline{\chi(m_i)}\chi(p)\right)=M\prod\limits_{i=1}^{k}\sum\limits_{{p\leq x} \atop {p\equiv m_i\, \bmod  N}}\dfrac{1}{p}.$$
 It is known (see for example \cite{CPom}) that for any $a$ coprime to $N$, $$\sum\limits_{{p\leq x} \atop {p\equiv a\, \bmod N}} \dfrac{1}{p} \sim \dfrac{1}{\phi(N)}\log\log x.$$
 Thus, $L_{k,\chi,\m_{[k]}}(x)$ is bounded below and above by functions that are each asymptotic to $\dfrac{M}{\phi(N)^k} (\log\log x)^k$, implying that $$L_{k,\chi,\m_{[k]}}(x) \sim \dfrac{M}{\phi(N)^k} (\log\log x)^k.$$
 \end{proof}
 \noindent
 Finally, Proposition \ref{theta} follows by using Lemma \ref{f-estimate} and Lemma \ref{L-estimate} in Equation (\ref{f,theta,L}). 
 \\
 \\
 \textbf{Remark: }Some care needs to be taken while applying Lemma \ref{L-estimate}. The term  $\sum\limits_{i=1}^{k} L_{k-1,\chi,\m^i_{[k-1]}} (x)$ appearing in Equation (\ref{f,theta,L}) involves number of distinct permutations of $\m^i_{[k-1]}$, whereas $M$ appearing in Proposition \ref{theta} is the number of distinct permutations of $\m_{[k]}$. This is resolved by using the following simple fact:\\
 Let $k_1 + k_2 +\ldots +k_m =n.$ Then $$\dfrac{n!}{k_1!k_2!\ldots k_m!} = \dfrac{(n-1)!}{(k_1-1)!k_2!\ldots k_m!} + \dfrac{(n-1)!}{k_1!(k_2 -1)!\ldots k_m!} + \ldots + \dfrac{(n-1)!}{k_1!k_2!\ldots (k_m -1)!}.$$
 \bigskip
 We are now ready to prove the theorem.
 \section{Proof of Theorem \ref{mainthm}}
 \noindent
 By partial summation we have %and the sequence being 
%$$ a_n = 
%\begin{cases}
%\dfrac{1}{\phi(N)^k}\chi_{\m} &\text{  if $n$ is squarefree with $k$ factors}\\
%0 &\text{otherwise}
%\end{cases}$$ 
$$
\vartheta_{k,\chi,\m_{[k]}} (x) = \Pi_{k,\chi,\m_{[k]}} (x) \log x- \int\limits_{2}^{x} \dfrac{\Pi_{k,\chi,\m_{[k]}} (t)}{t} dt.$$
Clearly, $\Pi_{k,\chi,\m_{[k]}} (t) = O(t)$ and therefore,
$$ \int\limits_{2}^{x} \dfrac{\Pi_{k,\chi,\m_{[k]}} (t)}{t} dt = O(x).$$
Hence, for $k\geq 2$, by Proposition \ref{theta},\\
$\Pi_{k,\chi,\m_{[k]}} (x) = \dfrac{\vartheta_{k,\chi,\m_{[k]}} (x)}{\log x} + O\left( \dfrac{x}{\log x}\right) \sim \dfrac{M}{\phi(N)^k}\dfrac{kx(\log\log x)^{k-1}}{\log x}.$ Thus,
\begin{equation} \label{Pi-estimate}
\dfrac{1}{M} \Pi_{k,\chi,\m_{[k]}} (x) \sim \dfrac{1}{\phi(N)^k}\dfrac{kx(\log\log x)^{k-1}}{\log x}.
\end{equation}
We now relate this to the functions $\pi_{k,\m_{[k]}}(x)$ and $\tau_{k,\m_{[k]}}(x)$.
It is easy to see that $$k!\pi_{k,\m_{[k]}}(x) \leq \dfrac{1}{M} \Pi_{k,\chi,\m_{[k]}} (x) \leq k!\tau_{k,\m_{[k]}}(x).$$
We have two cases to consider.\\
\textbf{Case 1:} The units $m_1,m_2,\ldots m_k$ are distinct. \\
Then $\boldsymbol{\chi}_{\m_{[k]}} =0$ unless $p_1,p_2,\ldots,p_k$ are all distinct. This forces the following equality:
$$k!\pi_{k,\m_{[k]}}(x) = \dfrac{1}{M} \Pi_{k,\chi,\m_{[k]}} (x) = k!\tau_{k,\m_{[k]}}(x),$$ so using Equation (\ref{Pi-estimate}) we are done.\\
\bigskip
\textbf{Case 2:} At least two of the $m_i$ are equal.\\
Certainly, in this case we include those $n= p_1\ldots p_k$ so that at least two of the primes are equal. The number of such $n\leq x$ is $\tau_{k,\m_{[k]}}(x) - \pi_{k,\m_{[k]}}(x).$ These $n$ can be expressed in the form $n= p_1\ldots p_k$ with $p_{k-1} = p_k$ and $\m_{[k]}$ with $m_{k-1} = m_k.$ Therefore, we have 
$$ \tau_{k,\m_{[k]}}(x) - \pi_{k,\m_{[k]}}(x) \leq \dfrac{1}{M} \sum\limits_{p_1p_2\ldots p_{k-1}^2\leq x} \dfrac{1}{\phi(N)^k} \boldsymbol{\chi}_{\m_{[k]}} \leq  \dfrac{1}{M} \sum\limits_{p_1p_2\ldots p_{k-1}\leq x} \dfrac{1}{\phi(N)^{k-1}} \boldsymbol{\chi}_{\m^k_{[k-1]}} = \dfrac{1}{M}\Pi_{k-1,\chi,\m_{[k-1]}} (x).$$
Since $\dfrac{1}{M}\Pi_{k-1,\chi,\m_{[k-1]}} (x)$ is $o\left(\dfrac{1}{M}\Pi_{k,\chi,\m_{[k]}} (x)\right)$, from our observation above, we have
$$\pi_{k,\m_{[k]}}(x)\sim \tau_{k,\m_{[k]}}(x)\sim \dfrac{1}{\phi(N)^k}\dfrac{x(\log\log x)^{k-1}}{(k-1)! \log x} \qquad(k\geq 2)$$ thus proving the theorem in this case as well.
\bigskip
 \section{Proofs of Proposition \ref{Prime} and Theorem \ref{main-2}}
 \noindent
 In order to prove Proposition \ref{Prime}, we note that it suffices to prove the result for $p$ odd, since $2$ is the only even prime and the density of finite sets is zero. Thus we will assume that $p$ is odd in the proof.  
\\
\textbf{Proof of Proposition \ref{Prime}.}\\
 Let $D =\pm q^{a_1}_1 q^{a_2}_2\ldots q^{a_m}_m$ be the decomposition of $D$. Then, by the multiplicative property of the Legendre symbol, we have $$\left(\dfrac{D}{p}\right)= \left(\dfrac{\pm 1}{p}\right) \left(\dfrac{q_1}{p}\right)^{a_1}\left(\dfrac{q_2}{p}\right)^{a_2}\ldots \left(\dfrac{q_m}{p}\right)^{a_m}$$
 $$= \pm \left(\dfrac{q_1}{p}\right)\left(\dfrac{q_2}{p}\right)\ldots \left(\dfrac{q_m}{p}\right).$$
 We have two possibilities:\\
 \\
 \textbf{Case (i):} $2\nmid D$\\
 Then, either $p\equiv 1 \,\bmod 4$ or $p\equiv 3 \,\bmod 4$. If $p\equiv 1\, \bmod 4$ then by quadratic reciprocity, $\left(\dfrac{q_i}{p}\right) = \left(\dfrac{p}{q_i}\right)$. Also, $\left(\dfrac{\pm 1}{p}\right) =1.$  If $p \equiv 3 \bmod 4$, $\left(\dfrac{\pm 1}{p}\right) =-1$ and $\left(\dfrac{q_i}{p}\right) = \pm \left(\dfrac{p}{q_i}\right)$, depending on whether $q_i \equiv 1$ or $ 3\bmod 4$. In general, we can write
 $$ \left(\dfrac{D}{p}\right) = \pm \left(\dfrac{p}{q_1}\right)\left(\dfrac{p}{q_2}\right)\ldots \left(\dfrac{p}{q_m}\right).$$
 Since $p\nmid q$, we know that $p$ is a unit mod $q$, so it is congruent to one of the $q-1$ units in $\Z/q\Z$. We also know that if $q$ is an odd prime, then there are $\dfrac{q-1}{2}$ squares in $(\Z/q\Z)^{\times}$, therefore we conclude that for each $q_i$, the equations $$\left(\dfrac{p}{q_i}\right)=1$$ and $$\left(\dfrac{p}{q_i}\right)=-1$$ each have $\dfrac{q_i-1}{2}$ solutions for $p\, \bmod q_i$.\\
 Let $S^+_i$ denote the set of $\dfrac{q_i-1}{2}$ congruences $\bmod \, q_i$ that solve $\left(\dfrac{p}{q_i}\right)=1$ and $S^-_i$ denote the set of $\dfrac{q_i-1}{2}$ congruences $\bmod \, q_i$ that solve $\left(\dfrac{p}{q_i}\right)=-1$.
\\
 Clearly, 
 \begin{equation}\label{product}
 \left(\dfrac{D}{p}\right) =1 \Leftrightarrow  \pm \left(\dfrac{p}{q_1}\right)\left(\dfrac{p}{q_2}\right)\ldots \left(\dfrac{p}{q_m}\right)=1,
 \end{equation}
  
 Now, the equations $$x_1x_2\ldots x_m =1 \text{ and }x_1x_2\ldots x_m =-1 $$ each have $M=2^{m-1}$ solutions in $\{-1,1\}^m$. 

 Let us enumerate them as
  $$X_1 = (x_{11},x_{12},\ldots,x_{1m})\qquad \qquad\qquad Y_1 = (y_{11},y_{12},\ldots,y_{1m})$$
 $$X_2 = (x_{21},x_{22},\ldots,x_{2m})\qquad \qquad\qquad Y_2 = (y_{21},y_{22},\ldots,y_{2m})$$
 $$\vdots \qquad\qquad\qquad \text{and             }\qquad\qquad \qquad\vdots$$
 $$X_M = (x_{M1},x_{M2},\ldots,x_{Mm}) \qquad\qquad Y_M = (y_{M1},y_{M2},\ldots,y_{Mm})$$
 respectively, where each of the $x_{ij},y_{ij}$ are $1$ or $-1$.
 Depending on whether we need the product in Equation (\ref{product}) to be $1$ or $-1$, we solve using $X_i$'s or $Y_j$'s.
 
 Without loss of generality let us assume that we need the product to be $1$ and that we are in the case $p \equiv 1 \bmod 4.$\\
 Then, for each solution $X_j$, $j=1,\ldots, M$  we need to solve the following system :\\
 $$p\equiv 1\, \bmod 4$$
 $$\left(\dfrac{p}{q_i}\right) = x_{ji},\qquad i=1, \ldots m.$$
 For each $i$, the equation $\left(\dfrac{p}{q_i}\right) = x_{ji}$ will involve choosing a congruence relation among $S^{\pm}_i$ depending on the parity of $x_{ji}$. This gives us a total of $\prod\limits_{i=1}^{m} \dfrac{q_i-1}{2}$ systems of congruences for each $X_j$. By the Chinese Remainder Theorem, each system will give rise to a unique solution. Thus, the total number of solutions we obtain is $$M \prod\limits_{i=1}^{m}\dfrac{q_i-1}{2} = 2^{m-1}\prod\limits_{i=1}^{m}\dfrac{q_i-1}{2} = \dfrac{1}{2}\prod\limits_{i=1}^{m}(q_i-1).$$
 \\
 Similarly we get $\dfrac{1}{2}\prod\limits_{i=1}^{m}(q_i-1)$ solutions coming from the parallel case of $p\equiv 3\, \bmod 4.$\\
 So, in total we have $\prod\limits_{i=1}^{m}(q_i-1)$ number of solutions $(\bmod \, 4q_1q_2\ldots q_m).$ \\
 If we denote $Q= 4q_1q_2\ldots q_m$, then $\left(\dfrac{D}{p}\right) =1 $ has $\dfrac{1}{2} \phi(Q)$ number of solutions $\bmod \, Q$.
 \\
 \textbf{Case (ii):}$2|D.$
 
 Without loss of generality, we may assume that $q_1 = 2$ and $q_i$ is odd for $i=2,\ldots,k.$\\
 Therefore, we need to find solutions to the equation 
 $$\left(\dfrac{D}{p}\right)=\pm \left(\dfrac{2}{p}\right) \left(\dfrac{q_2}{p}\right)\ldots \left(\dfrac{q_m}{p}\right)$$
 $$= \pm \left(\dfrac{2}{p}\right)\left(\dfrac{p}{q_2}\right)\ldots \left(\dfrac{p}{q_m}\right).$$
 The only difference in this case is that instead of considering the congruence $p\equiv 1$ or $3\, \bmod 4$, we further consider congruences $\bmod \, 8$:\\
 If $p\equiv 1 \,\bmod 4$, we have 
 $$\left(\dfrac{2}{p}\right) = 
 \begin{cases}
 1 & \text{if  } p\equiv 1\, \bmod 8\\
 -1 & \text{if  } p\equiv 5\, \bmod 8. 
 \end{cases}
 $$
 Thus in this case, for each $i=2,\ldots m$, we have $\dfrac{q_i-1}{2}$ number of congruences $\bmod \, q_i$ and one congruence $\bmod \,8$ corresponding to $i=1$. Therefore, for every $X_j$ (or $Y_j$, depending on whether we need the product to be $1$ or $-1$) we get $\prod\limits_{i=2}^{m} \dfrac{q_i-1}{2}$ number of solutions. Hence the total number of solutions is $$\prod\limits_{i=2}^{m} {(q_i-1)}.$$
 Similarly, if $p\equiv 3\, \bmod 4$, then we use 
 $$\left(\dfrac{2}{p}\right) = 
 \begin{cases}
 1 & \text{if  } p\equiv  7\, \bmod 8\\
 -1 & \text{if  } p\equiv 3\, \bmod 8 
 \end{cases}
 $$
 and obtain another set of $\prod\limits_{i=2}^{m} (q_i-1)$ solutions.\\
 So we have a total of $$2\prod\limits_{i=2}^{m} (q_i-1) = \dfrac{1}{2} \phi(4q_1q_2\ldots q_m)=\dfrac{1}{2}\phi(Q)$$ solutions, which is the same number as in Case 1.\\
 To summarize, for a fixed non-zero integer $D$, the number of odd primes $p \,\bmod Q$ so that $\left(\dfrac{D}{p}\right)=1$ is $\dfrac{1}{2}\phi(Q)$.
 Coming back to our problem, we wish to calculate $$\#\left\{ \text{primes }p\leq x :  \left( \frac{D}{p} \right)=1 \right\}.$$
 By the natural density statement of Dirichlet's theorem, we know that for any positive integer $a$ which is coprime to $n$,
  $$\# \{p\leq x,p \text{ prime } | p\equiv a \,\bmod n\} \sim \dfrac{1}{\phi(n)}\pi(x).$$
 Let $B(1) := \{b_i, i=1,\ldots,b_{\frac{\phi(Q)}{2}}\}$ denote the set solutions $\bmod \, Q$ obtained from the discussion above and $B(-1):= \{b'_i, i=1,\ldots,b'_{\frac{\phi(Q)}{2}}\}$ denote the remaining residue classes that correspond to the primes $p \bmod Q$ so that $\left(\dfrac{D}{p}\right)=-1$. Then, $\left( \frac{D}{p} \right)=1$ if and only if $p$ is congruent to any one of the elements in the set $B(1)$. So we have\\
 $$\#\left\{ \text{primes }p\leq x :  \left( \frac{D}{p} \right)=1 \right\} = \sum\limits_{i=1}^{\frac{\phi(Q)}{2}} \# \{p\leq x,p \text{ prime } | p\equiv b_i \,\bmod Q\} \sim \sum\limits_{i=1}^{\frac{\phi(Q)}{2}}\dfrac{1}{\phi(Q)}\pi(x)= \dfrac{1}{2} \pi(x).$$
 Hence, the asymptotic density of primes $p$ for which $\left( \frac{D}{p} \right)=1$ is $\dfrac{1}{2}.$
 \\
 
Using the set $B(-1)$, the same proof can be used to show that 
$$\#\left\{ \text{primes }p\leq x :  \left( \frac{D}{p} \right)=-1 \right\} \sim \dfrac{1}{2} \pi(x),$$
implying that the density of primes $p$ for which $f(x)$ has no solution $\bmod \,p$ is $\dfrac{1}{2}$.

 %Extending this result to the case of squarefree numbers with exactly $k$ factors, we have the following result.\\
%\begin{lemma} \label{sqfree-k}
%For a fixed $D \in \N$, 
%$$\# \bigg\{ \text{Squarefree }n\leq x, n=p_1p_2\ldots p_k: \left(\dfrac{D}{p_i}\right)=1 \text{ for each } i  \bigg\} \sim \dfrac{\pi_k(x)}{2^k} \sim \dfrac{x (\log\log x)^{k-1}}{2^k (k-1)!\log x}.$$
%\end{lemma}
\noindent
We now use this proposition to prove Theorem \ref{main-2}.\\
\\
\textbf{Remark:} 
 From the statement of Proposition \ref{Prime} and Theorem \ref{main-2}, it is clear that we are counting only those squarefree numbers with $k$-prime factors which are coprime to the discriminant $D$ of $f(x)$. \\
 \\
\textbf{Proof of Theorem \ref{main-2}.}
We first prove the statement for $n$ odd.\\In this case, using Proposition \ref{Prime} we conclude that the condition $$ \left(\dfrac{D}{p_i}\right)= \varepsilon_i \text{ for each } i  $$ will hold if and only if every prime $p_i$ dividing $n$ belongs to the set $B(\varepsilon_i)$.

Let us represent the (odd) squarefree number as a tuple $(p_1,p_2,\ldots,p_k)$ with $p_1<p_2<\ldots<p_k$ and choose any $k$-tuple $(m_1,m_2,\ldots,m_k)$ where each $m_i \in B(\varepsilon_i)$. Since $|B(\pm 1)| = \dfrac{\phi(Q)}{2}$, the number of $k$-tuples such that 
\begin{equation} \label{condition}
(p_1,p_2,\ldots,p_k) \equiv (m_1,m_2,\ldots,m_k)\, \bmod Q
\end{equation}
 component-wise is $\left(\dfrac{\phi(Q)}{2}\right)^k$. Therefore, appplying Theorem \ref{mainthm}, we have
$$ \# \bigg\{\text{Odd }n\leq x, n=p_1p_2\ldots p_k \text{ with } p_1<p_2<\ldots<p_k: \left(\dfrac{D}{p_i}\right)=\varepsilon_i \text{ for each } i  \bigg\} \sim \dfrac{1}{\phi(Q)^k} \dfrac{x (\log\log x)^{k-1}}{(k-1)!\log x}\left(\dfrac{\phi(Q)}{2}\right)^k,$$ settling the odd case.\\
\textbf{Note:} Even $n$ are counted only if $D$ is odd.\\
The even case follows by counting the number of odd squarefree $n \leq x/2 $ with $k-1$ prime factors. From the argument for the odd case, we have \\
$$ \# \bigg\{ n\leq x, n=2p_2\ldots p_k, \text{ with } 2=p_1<p_2<\ldots<p_k: \left(\dfrac{D}{p_i}\right)=\varepsilon_i \text{ for each } i  \bigg\} \sim \dfrac{1}{\phi(Q)^{k-1}} \pi_{k-1}(x/2)\left(\dfrac{\phi(Q)}{2}\right)^{k-1}.$$
Noting that $\dfrac{\pi_{k-1}(x/2)}{2^{k-1}} = o\left(\dfrac{\pi_{k}(x)}{2^k}\right),$ the result follows.\\

\begin{cor}
The density of squarefree numbers $n$ with k prime factors so that a quadratic equation has exactly $2^k$ solutions mod n is $\dfrac{1}{2^k}$.
\end{cor}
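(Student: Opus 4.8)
The plan is to recognise this corollary as the special case $\underline{\varepsilon} = (1,1,\ldots,1)$ of Theorem \ref{main-2}, after translating the condition ``$f$ has exactly $2^k$ solutions modulo $n$'' into a congruence condition on Kronecker symbols. First I would invoke the Chinese Remainder Theorem: for a squarefree $n = p_1 p_2 \cdots p_k$ with $p_1 < p_2 < \cdots < p_k$, the roots of $f(x) \equiv 0 \bmod n$ correspond bijectively to tuples of roots modulo the individual $p_i$, so the total number of solutions is the product $\prod_{i=1}^{k} \rho(p_i)$, where $\rho(p)$ denotes the number of solutions of $f(x) \equiv 0 \bmod p$.

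Next I would establish the local count $\rho(p) = 1 + \left(\dfrac{D}{p}\right)$ for every prime $p$, where $\left(\dfrac{D}{\cdot}\right)$ is the Kronecker symbol and $D = b^2 - 4c$ is the discriminant. For odd $p \nmid D$ this is the standard fact already used in Proposition \ref{Prime}: completing the square shows that $f$ has two, one, or no roots according as $D$ is a nonzero square, zero, or a nonsquare modulo $p$. For odd $p \mid D$ one has $\rho(p) = 1$ and $\left(\dfrac{D}{p}\right) = 0$, and a direct check modulo $2$ confirms the same identity there when the Kronecker symbol is used. Since each $\rho(p_i) \leq 2$, the product $\prod_{i} \rho(p_i)$ attains its maximal value $2^k$ if and only if $\rho(p_i) = 2$ for every $i$, that is, if and only if $\left(\dfrac{D}{p_i}\right) = 1$ for all $i$.

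This reduction shows that the squarefree $n \leq x$ with $k$ prime factors for which $f$ has exactly $2^k$ solutions modulo $n$ are precisely those counted in Theorem \ref{main-2} with $\varepsilon_1 = \cdots = \varepsilon_k = 1$. Applying that theorem directly yields the density $\dfrac{1}{2^k}$, completing the proof.

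The step requiring the most care is the local analysis at the primes dividing $2D$: one must verify that the identity $\rho(p) = 1 + \left(\dfrac{D}{p}\right)$ persists at $p = 2$ and at the ramified primes, not merely at the generic odd primes handled by quadratic reciprocity. Fortunately these exceptional primes are finite in number and, exactly as in Proposition \ref{Prime}, contribute nothing to the density; moreover the requirement $\rho(p_i) = 2$ automatically forces each $p_i$ to be coprime to $D$, so the ramified primes never arise among the $n$ that are actually counted.
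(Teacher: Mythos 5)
Your proposal is correct and follows the same route as the paper, which simply cites Theorem \ref{main-2} with $\varepsilon_i = 1$ for all $i$; you have merely made explicit the routine reduction (Chinese Remainder Theorem plus the local count $\rho(p) = 1 + \left(\frac{D}{p}\right)$) that the paper leaves implicit. Your added care at $p=2$ and the ramified primes is sound but, as you note yourself, unnecessary for the density statement since $\rho(p_i)=2$ already forces $p_i \nmid 2D$.
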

\begin{proof}
This easily follows from Theorem \ref{main-2} by taking $D$ as the discriminant of the quadratic equation and $\underline{\varepsilon}$ with $\varepsilon_i =1$ for each $i$.
\end{proof}
\noindent
\textbf{Note:}
We may ask what happens when $n$ has $k$ prime factors counted with multiplicity, i.e., when $n = p_1p_2\ldots p_k$ is not necessarily squarefree. We observe that in this case, the $k$-tuple $(m_1,m_2,\ldots,m_k)$ will neccesarily have $m_i = m_j$ whenever $p_i=p_j$. Therefore, for such $n$, the number of $k$-tuples satisfying Equation \ref{condition} will be bounded by $ \left(\dfrac{\phi(Q)}{2}\right)^{k}$ and equal to it if and only if $n$ is squarefree. Hence, we deduce the following:
\begin{cor} Let $D \in \Z-\{0\}$ and $k \in \N$. For any $k$-tuple $\underline{\varepsilon} = (\varepsilon_1, \ldots, \varepsilon_k)$ where each $\varepsilon_i = \pm 1$ for each $i=1,\ldots,k$, we have
 $$\# \bigg\{ n\leq x: n=p_1p_2\ldots p_k \text{ with } p_1\leq p_2\leq \ldots \leq p_k: \left(\dfrac{D}{p_i}\right)=\varepsilon_i \text{ for each prime } p_i|n  \bigg\} = O \left( \dfrac{1}{2^k}\tau_k(x)\right),$$ where $\tau_k(x)$ is the function defined in the introduction.
\end{cor}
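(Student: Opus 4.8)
The plan is to mirror the argument of Theorem \ref{main-2}, but with the squarefree counting function $\pi_{k,\m_{[k]}}$ replaced by $\tau_{k,\m_{[k]}}$ so that repeated prime factors are permitted, and to retain only an upper bound so as to avoid a delicate accounting of the non-squarefree configurations. I keep the notation of the proof of Proposition \ref{Prime}: set $Q=4q_1\ldots q_m$ and let $B(\pm 1)\subseteq(\Z/Q\Z)^{\times}$ be the residue classes $b$ with $\left(\frac{D}{b}\right)=\pm 1$, so that $|B(1)|=|B(-1)|=\frac{\phi(Q)}{2}$ and, for odd $p$, the condition $\left(\frac{D}{p}\right)=\varepsilon$ holds exactly when $p\bmod Q\in B(\varepsilon)$. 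Every prime relevant to the count is then automatically coprime to $Q$.

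First I would dispose of the even prime. If $D$ is even, no even $n$ is counted, so we may restrict to odd $n$. If $D$ is odd, the even $n$ have the shape $n=2p_2\ldots p_k$, and counting them reduces to an odd count of length $k-1$ below $x/2$; exactly as in the even case of Theorem \ref{main-2}, this is $O(\tau_{k-1}(x/2))=o(\tau_k(x))$ and is swallowed by the error term. Hence it suffices to treat odd $n$.

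For odd $n=p_1p_2\ldots p_k$ with $p_1\leq p_2\leq\ldots\leq p_k$, the residues $m_i:=p_i\bmod Q$ form a uniquely determined tuple $\m_{[k]}=(m_1,\ldots,m_k)$, and the sign conditions are equivalent to $m_i\in B(\varepsilon_i)$ for all $i$. Thus each admissible $n$ is counted by $\tau_{k,\m_{[k]}}(x)$ for exactly one tuple $\m_{[k]}$, giving
$$\#\Big\{\text{odd }n\leq x:\ \Big(\tfrac{D}{p_i}\Big)=\varepsilon_i\ \forall i\Big\}\ \leq\ \sum_{\substack{\m_{[k]}=(m_1,\ldots,m_k)\\ m_i\in B(\varepsilon_i)}}\tau_{k,\m_{[k]}}(x).$$
The number of tuples in this sum is $\prod_{i=1}^{k}|B(\varepsilon_i)|=\left(\frac{\phi(Q)}{2}\right)^{k}$, a quantity independent of $x$; and by Theorem \ref{mainthm} each summand (recall the $m_i$ need not be distinct) satisfies $\tau_{k,\m_{[k]}}(x)\sim\frac{1}{\phi(Q)^k}\frac{x(\log\log x)^{k-1}}{(k-1)!\log x}$. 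Since a fixed finite sum of asymptotically equal terms is again asymptotic to their common value times the number of terms, the right-hand side is $\sim\left(\frac{\phi(Q)}{2}\right)^{k}\frac{1}{\phi(Q)^k}\frac{x(\log\log x)^{k-1}}{(k-1)!\log x}=\frac{1}{2^k}\frac{x(\log\log x)^{k-1}}{(k-1)!\log x}$, which by Equation (\ref{Landau}) is $\sim\frac{1}{2^k}\tau_k(x)$. Together with the negligible even contribution, this yields the asserted bound $O\left(\frac{1}{2^k}\tau_k(x)\right)$.

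The only point requiring care — and the reason the statement is phrased with $O$ rather than $\sim$ — is the handling of the non-squarefree $n$. When two of the listed primes coincide, $p_i=p_j$, their residues must agree, $m_i=m_j$, which is possible only if $\varepsilon_i=\varepsilon_j$ since $B(1)\cap B(-1)=\emptyset$; so for a general sign pattern $\underline{\varepsilon}$ the non-squarefree $n$ populate only some of the $\left(\frac{\phi(Q)}{2}\right)^{k}$ tuples. Passing to the plain upper bound over all admissible tuples sidesteps this bookkeeping entirely: it is exact on the squarefree $n$ and harmlessly absorbs whatever non-squarefree $n$ occur, each of which is already governed by Theorem \ref{mainthm}. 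The substantive work is thus entirely contained in Theorem \ref{mainthm} and Proposition \ref{Prime}, the residual effort being the routine verification that both the even $n$ and the non-squarefree $n$ contribute nothing beyond the stated order.
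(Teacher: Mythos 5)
Your proof is correct and follows essentially the same route as the paper, which derives this corollary from the brief ``Note'' preceding it: one sums $\tau_{k,\m_{[k]}}(x)$ over the at most $\left(\frac{\phi(Q)}{2}\right)^{k}$ admissible residue tuples and applies Theorem \ref{mainthm} to each. Your write-up merely spells out the details (uniqueness of the residue tuple, the even prime, the coincidence constraint $m_i=m_j$ for repeated primes) that the paper leaves implicit.
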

\bigskip
\noindent
%\textbf{Acknowledgements}

%I thank my advisor Dr. Kaneenika Sinha for encouraging me to write this article and giving her comments and Prof. Ram Murty for his comments and suggesting some improvements. I also thank the referee for useful remarks.

\end{document}